\newtheorem{theorem}{Theorem}[section]
\newtheorem{theorem**}{Theorem}
\newtheorem{theorem***}{Theorem}
\newtheorem*{theorem*}{Theorem}
\newtheorem{corollary}{Corollary}[theorem]
\newtheorem{proposition}{Proposition}[theorem]
\newtheorem{lemma}[theorem]{Lemma}
\title{Strichartz Estimates for One Dimensional Stochastic Schrodinger Equation with Potential}
\author{Abhinav Goel\\{\small Mentor: Joshua Messing}\\{\small Project suggested by: Gigliola Staffilani}}
\date{June 2024}
\begin{document}

\maketitle

\begin{center}
\section{\centering Abstract}
This paper proves Strichartz estimates for the Schrodinger Equation with a potential term and white noise dispersion. We restrict our equation to dimension $1$. We consider dispersive estimates using work from Goldberg et al.\cite{Goldberg}., as well as Strichartz estimates using work from de Bouard et al. \cite{DeBouard}.

\end{center}

\section{\centering Introduction}

The Schrodinger Equation is a partial differential equation commonly used in Quantum Mechanics. It is used to model the positions and behaviors of particles as well as time evolutions of wave functions. It takes the form

\begin{equation} \label{eq1}
\begin{cases}
iu_t  = Hu \\
u_{t=0} = u_0 
\end{cases}
\end{equation}

where the Hamiltonian $H = -\Delta + V$ for some potential function $V$. 

We also consider a stochastic version of the equation,

\begin{equation} \label{eq2}
\begin{cases}
iu_t + \Delta u \circ \beta(t)  = 0 \\
u_{t=0} = u_0 
\end{cases}
\end{equation}

where $\beta(t)$ is a standard one-dimensional Brownian motion and $\circ$ denotes the Stratanovich product. This equation is commonly used in the field of fiber optics where light is being guided through optical (glass) fibers. This has applications ranging from internet and television to medical devices. Equation \ref{eq2} is used to describe signal propagation in optical fibers with dispersion management \cite{Agrawal,DeBouard}. Light can be composed of many different wavelengths, with each wavelength having different behaviors in the fiber. Additionally, distortion can be caused by elements in the fiber itself. Thus, dispersion management is used to limit these effects.

Combining the above two forms of equations, we consider the following equation:

\begin{equation} \label{eq3}
\begin{cases}
i \, \text{d}u = Hu \circ \, \text{d} \beta  \\
u_{t=0} = u_0 
\end{cases}
\end{equation}

Here, we replace the Laplacian from equation \ref{eq2} with the Hamiltonian, leading to an interaction between the potential and the Brownian motion. Studying this equation is motivated by the Euler-Heisenberg interaction, which dictates the bending of light when interacting with an electric charge \cite{Kim}. While the bending of light has been studied with respect to forces like gravity, a small electric charge peculiarly leads to a non-linear interaction. This creates a more complicated model, modeled similarly to equation $3$.  We will now investigate this equation providing a decay rate of $|t|^{\frac{-1}{2}}$ for the $L^{\infty}$ norm of the solution with $L^1$ initial solution.

We use work by Goldberg et al. \cite{Goldberg}, which examines equation \ref{eq1} when in dimensions $1$ and $3$ . By projecting on the spectral measure of the Hamiltonian, it finds a dispersive estimate of $|t|^{\frac{-1}{2}}$ for the solution operator. We use their approach to generate a dispersive estimate for equation \ref{eq3} below.

\begin{theorem**}
As shown below in section $3$, the solution operator to equation \ref{eq3} is $e^{-i\beta(t)H}$. Let $L_j^1(\mathbb{R}) \coloneq \{f: \int_{-\infty}^{\infty} |f(x)|(1+|x|)^j \, \text{d}x < \infty \}$. Assume $V \in L_1^1(\mathbb{R})$, and let $P_{ac}$ be the projection onto the absolutely continuous spectral subspace. If our system has no resonance at zero energy (as defined below in section $3$), then,

$$||e^{-i\beta(t)H}P_{ac}(H)||_{1 \rightarrow \infty} \lesssim |\beta(t)|^{\frac{-1}{2}}$$
\end{theorem**}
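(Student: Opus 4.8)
The plan is to reduce the stochastic dispersive estimate to the deterministic one proved by Goldberg et al.\ and then substitute the Brownian motion for the time variable. Having established in Section $3$ that the solution operator to equation \ref{eq3} is $e^{-i\beta(t)H}$, I would first observe that the Hamiltonian $H = -\Delta + V$ is a fixed (deterministic) self-adjoint operator on $L^2(\mathbb{R})$, and that the only randomness entering the propagator is through the real scalar multiplier $\beta(t)$. Thus for each fixed time $t$ and each sample path $\omega$, the quantity $\beta(t,\omega)$ is simply a real number $s$, and $e^{-i\beta(t)H}$ coincides with the ordinary Schrodinger propagator $e^{-isH}$ evaluated at the ``time'' $s = \beta(t,\omega)$. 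This pathwise reduction is the conceptual heart of the argument: the stochastic flow is nothing more than the deterministic flow run for a random amount of (signed) time.

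Next I would invoke the one-dimensional dispersive estimate of Goldberg et al.\ \cite{Goldberg}. Their result states that whenever $V \in L_1^1(\mathbb{R})$ and $H$ has no resonance at zero energy, the propagator restricted to the absolutely continuous subspace satisfies $\|e^{-isH}P_{ac}(H)\|_{1 \rightarrow \infty} \lesssim |s|^{-1/2}$ for all real $s \neq 0$. Since the hypotheses on $V$ and the zero-energy resonance condition are exactly those assumed in the present theorem, this bound applies without modification. I would recall its mechanism briefly: one uses the Stone-formula representation $e^{-isH}P_{ac}(H) = \frac{1}{2\pi i}\int_0^\infty e^{-is\lambda}\bigl[R_V(\lambda + i0) - R_V(\lambda - i0)\bigr]\,d\lambda$, together with resolvent expansions and oscillatory-integral (stationary phase) bounds, to extract the half-power decay in the time parameter. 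I would also note that $P_{ac}(H)$ commutes with $e^{-i\beta(t)H}$, since both are Borel functions of the same self-adjoint operator $H$, so the restriction to the absolutely continuous subspace is preserved under the flow.

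Finally, because the bound $\|e^{-isH}P_{ac}(H)\|_{1 \rightarrow \infty} \lesssim |s|^{-1/2}$ holds for every nonzero real $s$, I would substitute $s = \beta(t,\omega)$ to obtain $\|e^{-i\beta(t)H}P_{ac}(H)\|_{1 \rightarrow \infty} \lesssim |\beta(t)|^{-1/2}$ pathwise in $\omega$, which is precisely the claimed estimate. The main obstacle is not the substitution itself, which is essentially formal, but rather verifying that Goldberg's deterministic estimate may be quoted verbatim: I must confirm that $V \in L_1^1(\mathbb{R})$ and the absence of a zero-energy resonance are exactly the conditions under which their $|s|^{-1/2}$ bound holds, with no additional smoothness or decay of $V$ silently required, and that the implied constant is genuinely uniform in $s$. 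A secondary point worth addressing is the behavior as $\beta(t) \to 0$: since Brownian motion returns to zero, the right-hand side degenerates at those instants exactly as the deterministic estimate degenerates at $t = 0$, and this causes no difficulty for the pathwise statement, mirroring the blow-up of the free kernel $(4\pi |s|)^{-1/2}$ as $s \to 0$.
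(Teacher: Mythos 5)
Your proposal is correct, but it takes a genuinely different and more economical route than the paper does. The paper never quotes the deterministic theorem of Goldberg et al.\ \cite{Goldberg} as a black box; instead it re-runs their entire machinery with $\beta(t)$ inserted in place of the deterministic time parameter: for high energies, Stone's formula plus the Born expansion of the resolvent, reducing matters to oscillatory integrals with phase $\beta(t)\lambda^2 + a\lambda$ that are controlled by the free dispersive bound together with uniform $L^1$ bounds on the inverse Fourier transforms of the cutoffs $\chi_L(\lambda^2)\lambda^{-n}$; for low energies, the Jost solutions $f_\pm$, the Wronskian $W(\lambda)$, and Wiener's lemma, again with the random phase $e^{-i\beta(t)\lambda^2}$. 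Your observation short-circuits all of this: since $H$ is a fixed self-adjoint operator and the randomness enters only through the real scalar $\beta(t,\omega)$, the operator $e^{-i\beta(t)H}$ is, pathwise, exactly the deterministic propagator $e^{-isH}$ at $s = \beta(t,\omega)$, so the claimed bound follows by substitution into the deterministic estimate $\|e^{-isH}P_{ac}(H)\|_{1\to\infty} \lesssim |s|^{-1/2}$. This is legitimate precisely for the reasons you flag: the hypotheses ($V \in L^1_1(\mathbb{R})$, no zero-energy resonance) match those of \cite{Goldberg}, the constant there depends only on $V$ and is uniform over all real $s \neq 0$, and the bound is symmetric under $s \mapsto -s$, which matters because Brownian motion takes negative values. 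What your shortcut buys is brevity and a sharp conceptual point -- the theorem is a pathwise corollary of the deterministic result, with no stochastic analysis needed beyond the identification of the solution operator. What the paper's re-derivation buys is a self-contained account whose intermediate steps (in particular the low-energy oscillatory-integral bound stated with the phase $e^{-i\beta(t)\lambda^2}$) are packaged in a form the author reuses when setting up the Strichartz estimates; but for the statement of this theorem alone, your argument suffices and is the cleaner of the two.
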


We also use work by de Bouard et al. \cite{DeBouard}, which creates Strichartz estimates for equation \ref{eq2}. By using Theorem $1$, along with the estimates from de Bouard, we can create Strichartz estimates for equation \ref{eq3}.

\begin{theorem**}
    Let $2 \leq r < \infty$ and $2 \leq p \leq \infty$ such that $\frac{2}{r} > \frac{1}{2} - \frac{1}{p}$ or $r=\infty$ and $p=2$. Let $S(\cdot,\sigma)$ be our solution operator for equation $3$ given initial condition at $t = \sigma$. Additionally, set $\rho$ such that $r' \leq \rho \leq r$, where $r'$ is the Holder conjugate of $r$. Then, there exists a constant $c_{\rho,r,p}$ such that for any $s \in \mathbb{R}$, $T \leq 0$, and $f \in L^{\rho}_{\mathcal{P}}(\Omega; L^{r'}(s,s+T;L_x^{p'}))$

    $$|\int_s^{\cdot} S(\cdot,\sigma)P_{ac}(H)f(\sigma) \, \text{d} \sigma|_{L^{\rho}(\Omega; L^{r}(s,s+T;L_x^{p}))} \leq c_{\rho,r,p}T^{\mu}|f|_{L^{\rho}(\Omega; L^{r'}(s,s+T;L_x^{p'}))} $$
with $\mu = \frac{2}{r} + \frac{1}{2p} - \frac{1}{4}$
\end{theorem**}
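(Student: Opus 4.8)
The plan is to reduce the stochastic estimate to a purely deterministic fractional-integration bound, by first turning the pathwise dispersive estimate of Theorem $1$ into decay for the relevant exponents, and then exploiting the independence of Brownian increments to replace the random decay $|\beta(t)-\beta(\sigma)|^{-\theta}$ by the deterministic profile $|t-\sigma|^{-\theta/2}$. \textbf{Step 1 (interpolation).} Since $H$ is self-adjoint, $S(t,\sigma)=e^{-i(\beta(t)-\beta(\sigma))H}$ is unitary on $L^2$ and commutes with $P_{ac}(H)$, so $\|S(t,\sigma)P_{ac}(H)\|_{2\to 2}\le 1$. Interpolating this with Theorem $1$ via Riesz--Thorin would give, for $2\le p\le\infty$, the pathwise estimate
$$\|S(t,\sigma)P_{ac}(H)\|_{p'\to p}\lesssim |\beta(t)-\beta(\sigma)|^{-(1/2-1/p)},$$
with decay exponent $\alpha:=\tfrac12-\tfrac1p$.

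\textbf{Step 2 (Minkowski and independence).} Using $\rho\le r$, Minkowski's inequality lets me move the probability norm inside the time norm, $\|\cdot\|_{L^{\rho}_{\omega}L^{r}_{t}L^{p}_{x}}\le\|\cdot\|_{L^{r}_{t}L^{\rho}_{\omega}L^{p}_{x}}$, after which the triangle inequality for the Bochner integral gives, for fixed $t$, the bound $\int_s^t\|S(t,\sigma)P_{ac}(H)f(\sigma)\|_{L^{\rho}_{\omega}L^{p}_{x}}\,\text{d}\sigma$. For $\sigma<t$ the increment $\beta(t)-\beta(\sigma)$ is independent of $\mathcal{F}_\sigma$, hence of the predictable $f(\sigma)$; combining this with Step 1 and the Gaussian moment identity $\mathbb{E}|\beta(t)-\beta(\sigma)|^{-\theta}=c_\theta|t-\sigma|^{-\theta/2}$, valid for $\theta<1$, should yield
$$\|S(t,\sigma)P_{ac}(H)f(\sigma)\|_{L^{\rho}_{\omega}L^{p}_{x}}\lesssim |t-\sigma|^{-\gamma}\,\|f(\sigma)\|_{L^{\rho}_{\omega}L^{p'}_{x}},\qquad \gamma=\tfrac{\alpha}{2}=\tfrac14-\tfrac1{2p}.$$
The stochastic problem then collapses to the deterministic operator $(K_\gamma h)(t)=\int_s^t|t-\sigma|^{-\gamma}h(\sigma)\,\text{d}\sigma$ applied to $h(\sigma)=\|f(\sigma)\|_{L^{\rho}_{\omega}L^{p'}_{x}}$.

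\textbf{Step 3 (deterministic kernel and powers of $T$).} Here I would apply H\"older in $\sigma$ with exponents $(r,r')$ to get $(K_\gamma h)(t)\lesssim T^{1/r-\gamma}\|h\|_{L^{r'}(s,s+T)}$, which is finite exactly when $\gamma r<1$, i.e. $\alpha<2/r$ --- precisely the hypothesis $\tfrac2r>\tfrac12-\tfrac1p$. Taking the $L^r_t$ norm over the interval of length $T$ contributes a further factor $T^{1/r}$, producing $\|K_\gamma h\|_{L^r_t}\lesssim T^{2/r-\gamma}\|h\|_{L^{r'}_t}$ with $2/r-\gamma=\mu$. A second Minkowski interchange, this time using $r'\le\rho$, converts $\|h\|_{L^{r'}_t}=\|f\|_{L^{r'}_t L^{\rho}_{\omega}L^{p'}_{x}}$ into the stated right-hand side $\|f\|_{L^{\rho}_{\omega}L^{r'}_{t}L^{p'}_{x}}$. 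This reproduces $\mu=\tfrac2r+\tfrac1{2p}-\tfrac14$ and shows that the twin constraints $r'\le\rho\le r$ are exactly what license the two Minkowski swaps. The endpoint $r=\infty,\,p=2$ is degenerate, since then $\alpha=\gamma=\mu=0$ and the claim follows from $L^2$-unitarity alone.

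The hard part will be Step 2: the Gaussian negative-moment identity needs the exponent strictly below $1$, whereas raising the full integrand to the power $\rho$ forces the moment $\mathbb{E}|\beta(t)-\beta(\sigma)|^{-\alpha\rho}$, so one must control $\alpha\rho=\rho(\tfrac12-\tfrac1p)$, while the deterministic hypothesis $\tfrac2r>\tfrac12-\tfrac1p$ only delivers $\alpha r<2$. Closing this gap across the whole range $r'\le\rho\le r$ is where I expect the genuine difficulty to lie; the most promising routes are to split the $\sigma$-integral into a near-diagonal piece handled by unitarity (needing no negative moment) and an off-diagonal piece where the moment is harmless, or to reorder the H\"older step and the expectation so that only a first-power negative moment with $\theta<1$ ever appears. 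The remaining measurability and adaptedness bookkeeping --- predictability of $f$, the Bochner triangle inequality, and finiteness of the intermediate $L^{\rho}_{\omega}$ norms --- I expect to be routine.
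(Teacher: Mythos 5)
Your Step 1 (Riesz--Thorin between Theorem 1 and $L^2$-unitarity) and your overall reduction (triangle inequality in $\sigma$, producing the random kernel $|\beta(t)-\beta(\sigma)|^{-\alpha}$ with $\alpha=\frac12-\frac1p$) are exactly the paper's Lemma 4.1 and the first half of its proof of this theorem. The gap is in Step 2, right where you predicted it, and it is fatal to your strategy rather than a technicality: replacing the random kernel by the deterministic one $|t-\sigma|^{-\alpha/2}$ inside the $L^{\rho}_{\omega}$ norm forces the negative moment $\mathbb{E}\,|\beta(t)-\beta(\sigma)|^{-\alpha\rho}$, which is finite only when $\alpha\rho<1$, while the admissibility hypothesis only gives $\alpha\rho\le\alpha r<2$. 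This is not a corner case: already for $p=\infty$, $r=\rho=2$ (admissible, since $\frac{2}{r}=1>\frac12$) one has $\alpha\rho=1$, and $\mathbb{E}\,|\beta(t)-\beta(\sigma)|^{-1}=\infty$ for every $t\neq\sigma$ because the Gaussian density is positive at the origin. For the same reason your first proposed repair (near/off-diagonal splitting in $t-\sigma$) cannot work: the divergence of the moment has nothing to do with the size of $t-\sigma$. Your second repair (pathwise H\"older in $\sigma$ before taking the expectation) also fails: by the occupation-time formula and positivity of Brownian local time near the terminal value, $\int_s^t|\beta(t)-\beta(\sigma)|^{-\alpha r}\,\text{d}\sigma=\infty$ almost surely once $\alpha r\ge 1$, which the hypotheses allow (e.g.\ $p=\infty$, $r=3$). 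In short, no argument that first collapses the problem to a deterministic fractional-integration bound --- whether pointwise in $(t,\sigma)$ via moments or pathwise via H\"older --- can cover the stated range of exponents.

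The paper avoids this entirely by keeping the random kernel intact and invoking the genuinely stochastic estimate of de Bouard et al.: its Lemma 4.2 (cited without proof) gives the bound at the level $r=\rho=2$ for all $\alpha\in[0,1)$, the quadratic appearance of the kernel being handled through predictability of $f$ and the independence structure of Brownian increments so that no non-integrable Gaussian singularity ever arises; Lemma 4.3 then extends this to all $r'\le\rho\le r$, $2\le r<\frac{2}{\alpha}$, by interpolation against the trivial case $\alpha=0$, $r=\infty$. The theorem follows by applying Lemma 4.3 with $\alpha=\frac12-\frac1p$ to the output of your Step 1, yielding the factor $T^{\frac{2}{r}-\frac{\alpha}{2}}=T^{\mu}$. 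So what your proposal is missing is precisely this bilinear-plus-interpolation ingredient; to complete your argument you would either have to cite the de Bouard--Debussche lemma, as the paper does, or reproduce its conditioning argument --- your deterministic-collapse route cannot be patched to do the job.
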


\begin{theorem**}
    Let $2 \leq r < \infty$ and $2 \leq p \leq \infty$ be such that $\frac{2}{r} > \frac{1}{2} - \frac{1}{p}$ or $r = \infty$ and $p = 2$. Then there exists a constant $c_{r,p} > 0$ such that for $s \in \mathbb{R}, T \geq 0$, $u_s \in L^r(\Omega;L_x^2)$,

    $$|S(\cdot,s)P_{ac}(H)u_s|_{L^r(\Omega;L^r(s,s+T;L^p_x))} \leq cT^{\frac{\mu}{2}}|u_s|_{L^r_{\omega}(L_x^2)}$$

    where $\mu = \frac{2}{r} - \frac{1}{2}(\frac{1}{2} - \frac{1}{p})$

    \text{Specifically},

    $$|e^{-i\beta(t)H}P_{ac}(H)u_0|_{L^r(\Omega;L^r(0,T;L^p_x))} \leq cT^{\frac{\mu}{2}}|u_0|_{L^r_{\omega}(L_x^2)}$$

    \end{theorem**}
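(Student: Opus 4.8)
The plan is to obtain the homogeneous estimate from the inhomogeneous estimate of Theorem 2 through a $TT^*$ (duality) argument, which is natural here because the target exponents match exactly: writing $\mu=\frac{2}{r}+\frac{1}{2p}-\frac14$ as in Theorem 2, the exponent $\frac12\left(\frac12-\frac1p\right)$ subtracted from $\frac2r$ in the present statement satisfies $\frac{2}{r}-\frac12\left(\frac12-\frac1p\right)=\mu$, so the claimed power $T^{\mu/2}$ is precisely the square root of the power $T^{\mu}$ produced by the inhomogeneous (hence $TT^*$) operator. First I would record the interpolated dispersive bound: since $e^{-i\beta(t)H}P_{ac}(H)$ is an $L^2_x$ isometry on the absolutely continuous subspace, the $1\to\infty$ estimate of Theorem 1 interpolates with the trivial $2\to 2$ bound to give $\|S(t,\tau)P_{ac}(H)\|_{p'\to p}\lesssim |\beta(t)-\beta(\tau)|^{-\alpha}$ with $\alpha=\frac12-\frac1p$, using $S(t,\tau)=e^{-i(\beta(t)-\beta(\tau))H}$.

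Next I would dualize. Testing $S(\cdot,s)P_{ac}(H)u_s$ against $h\in L^{r'}(\Omega;L^{r'}(s,s+T;L^{p'}_x))$ and moving the (self-adjoint) evolution onto $h$ reduces the claim, by Cauchy--Schwarz in $x$ and H\"older in $\Omega$, to the dual estimate
\[
\Big\|\int_s^{s+T}S(t,s)^*P_{ac}(H)h(t)\,dt\Big\|_{L^{r'}(\Omega;L^2_x)}\lesssim T^{\mu/2}\,\|h\|_{L^{r'}(\Omega;L^{r'}(s,s+T;L^{p'}_x))}.
\]
Denoting the integral by $F$, I would expand $\|F\|_{L^2_x}^2$ as a double integral over $(t,\tau)\in(s,s+T)^2$ and, on each slice, bound $|\langle S(\tau,t)P_{ac}(H)h(t),h(\tau)\rangle|$ by $\|S(\tau,t)P_{ac}(H)\|_{p'\to p}\|h(t)\|_{p'}\|h(\tau)\|_{p'}$, using $S(\tau,s)S(t,s)^*=S(\tau,t)$. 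This is precisely where the $p'\to p$ dispersive bound enters, turning the kernel into $|\beta(\tau)-\beta(t)|^{-\alpha}$.

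The characteristic mechanism of the white-noise setting appears when taking expectations. Because $r\ge 2$ forces $r'/2\le 1$, Jensen's inequality lets me pass $\mathbb{E}$ inside the power, $\mathbb{E}\big[\|F\|_{L^2_x}^{r'}\big]\le\big(\mathbb{E}\|F\|_{L^2_x}^2\big)^{r'/2}$, after which the only randomness is the Gaussian increment $\beta(\tau)-\beta(t)\sim\mathcal{N}(0,|t-\tau|)$. A direct density computation gives $\mathbb{E}\,|\beta(\tau)-\beta(t)|^{-\alpha}\lesssim |t-\tau|^{-\alpha/2}$, finite precisely because $\alpha=\frac12-\frac1p<1$; this \emph{halves} the dispersive exponent and replaces the random time $|\beta|$ by the deterministic time $|t-\tau|$. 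The remaining deterministic double integral $\iint |t-\tau|^{-\alpha/2}\|h(t)\|_{p'}\|h(\tau)\|_{p'}$ is controlled by Young's inequality on the interval of length $T$: the kernel $|u|^{-\alpha/2}$ lies in $L^{r/2}$ there, with norm $\sim T^{2/r-\alpha/2}=T^{\mu}$, and the admissibility hypothesis $\frac2r>\frac12-\frac1p$ (that is, $\alpha r<2$) ensures the local integrability $|u|^{-\alpha/2}\in L^{r/2}$ needed for Young's inequality, with room to spare. Assembling the powers yields $\|F\|_{L^{r'}(\Omega;L^2_x)}\lesssim T^{\mu/2}\|h\|$, which is the dual estimate; the endpoint $r=\infty,\,p=2$ is the trivial $L^2_x$ isometry.

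I expect the main obstacle to be the interaction between the stochastic $L^r(\Omega)$ norm and the bilinear $TT^*$ structure: for $r\neq 2$ the domain $L^r(\Omega;L^2_x)$ is not a Hilbert space, so the clean identity $\|T\|^2=\|TT^*\|$ is unavailable and the argument must instead run through the dual estimate above, where the Jensen step (valid only because $r\ge 2$) and the order of the $\Omega$-expectation relative to the $t,\tau$ integrations must be tracked carefully. A secondary point requiring care is the measurability and adaptedness of $h$, together with the justification that $S(\tau,s)S(t,s)^*=S(\tau,t)$ carries the correct Gaussian law for its increment; once these are in place the deterministic Young/Hardy--Littlewood--Sobolev estimate and the verification that the resulting exponent equals $\mu/2$ are routine.
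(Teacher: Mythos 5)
Your overall skeleton --- dualize, expand the $TT^*$ double integral, insert the interpolated $p'\to p$ dispersive bound $\|S(\tau,t)P_{ac}(H)\|_{p'\to p}\lesssim|\beta(\tau)-\beta(t)|^{-\alpha}$ with $\alpha=\frac12-\frac1p$, and check that the exponents assemble to $T^{\mu/2}$ --- is the same as the paper's, and that bookkeeping is correct. The genuine gap is in the stochastic step. After Jensen you assert that ``the only randomness is the Gaussian increment'' and replace $|\beta(\tau)-\beta(t)|^{-\alpha}$ by $\mathbb{E}\,|\beta(\tau)-\beta(t)|^{-\alpha}\lesssim|t-\tau|^{-\alpha/2}$ while treating $\|h(t)\|_{p'}\|h(\tau)\|_{p'}$ as constants. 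But in the duality argument $h$ cannot be taken deterministic: to recover the norm $|S(\cdot,s)P_{ac}(H)u_s|_{L^r(\Omega;L^r(s,s+T;L^p_x))}$ you must pair against random (predictable) $h\in L^{r'}(\Omega;L^{r'}(s,s+T;L^{p'}_x))$, and such $h$ is correlated with the Brownian path, so $\mathbb{E}\bigl[|\beta(\tau)-\beta(t)|^{-\alpha}\|h(t)\|_{p'}\|h(\tau)\|_{p'}\bigr]$ does not factor. Handling exactly this correlation is the content of the stochastic lemma imported from de Bouard et al.\ (Lemma 4.2), whose hypotheses --- predictability of $f$ and a one-sided, retarded time integral $\int_0^t$ --- are what allow independence of increments to be exploited. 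That is why the paper's proof symmetrizes the double integral into the retarded operator $\int_s^t S(t,\sigma)P_{ac}(H)f(\sigma)\,\mathrm{d}\sigma$ and then cites Theorem 2 (with $\rho=r'$), rather than computing a Gaussian expectation directly. What you dismiss as a ``secondary point requiring care'' (adaptedness of $h$) is in fact the heart of the proof.

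There is a second, independent failure in the same step: even if the factorization were legal, Jensen produces the wrong stochastic moment of $h$. Jensen gives $\|F\|_{L^{r'}(\Omega;L^2_x)}\le\bigl(\mathbb{E}\|F\|^2_{L^2_x}\bigr)^{1/2}$, and your kernel-plus-Young computation then bounds this by $T^{\mu/2}\|h\|_{L^{2}(\Omega;L^{r'}(s,s+T;L^{p'}_x))}$. For $r>2$ one has $r'<2$, and on a probability space $\|h\|_{L^{r'}(\Omega;\cdot)}\le\|h\|_{L^{2}(\Omega;\cdot)}$, so the bound you obtain is in terms of a \emph{stronger} norm than the $\|h\|_{L^{r'}(\Omega;\cdot)}$ required by duality; the dual estimate does not follow. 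The paper resolves this moment mismatch not by Jensen but by the interpolation in $(\rho,r)$ built into Lemma 4.3 and inherited by Theorem 2, which is precisely why those statements carry the flexibility $r'\le\rho\le r$. To repair your argument, replace the Jensen-plus-Gaussian-computation step by the paper's route: split the square $(t,\sigma)\in(s,s+T)^2$ symmetrically into $\{\sigma\le t\}$ and $\{t\le\sigma\}$, pair the retarded integral against $f(t)$ via H\"older, and apply the inhomogeneous estimate of Theorem 2 with $\rho=r'$.
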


\section{\centering Dispersive Estimates}
Consider the equation 

$$i\, \text{d}u = (-\Delta+V)u \circ \, \text{d}\beta$$

with $u \in \mathbb{R}$, $V \in L^1_1(\mathbb{R})$, $\beta(t)$ a standard one-dimensional Brownian motion,  and some initial condition $u_0$. $V \in L^1_1(\mathbb{R})$ means $\int_{-\infty}^{\infty} |V(x)|(1+|x|) \, \text{d}x < \infty$.

We can approach this equation by converting to its Ito form, 

$$i\, \text{d}u = \frac{-i}{2}(-\Delta+V)^2u \, \text{d}\text{t} + (-\Delta+V)u \, \text{d}\beta$$

In the integral form, this evaluates to

$$i(u(t)-u(0)) + \frac{i}{2} \int_0^t (-\Delta+V)^2u \, \text{d}\text{t} - \int_0^t (-\Delta+V)u \, \text{d}\beta  =0
 $$

Define the Hamiltonian, $H$, as $-\Delta+V$.

Let us consider the function 
$$g(t,x) = u_0(x) \cdot \int_{\sigma(H)} e^{-i\beta(t)\lambda} \, \text{d}H(\lambda)$$

where $\, \text{d}H$ is the spectral measure. 

We can apply Ito's lemma to see 

$$ig(t,x) = ig(0,x) + i\int_0^t \int (-i\lambda)e^{-i\beta_s\lambda}\, \text{d}H(\lambda) u_0(x)\, \text{d}\beta_s + \frac{i}{2} \int_{0}^{t} \int (-\lambda^2) e^{-i\beta_s\lambda} \, \text{d}H(\lambda)u_0(x)\, \text{d}S$$

$$ = ig(0,x) + \int_0^t Hg(s,x) \, \text{d}\beta_s - \frac{i}{2} \int_0^t H^2 g(s,x) \, \text{d}t$$

Thus, $i \, \text{d}g = Hg \, \text{d} \beta_s - \frac{i}{2}H^2g \, \text{d}t$ and so $g$ is a solution to equation \ref{eq3}. We see our solution operator is $e^{-i\beta(t)H}$ as defined by the functional calculus.

For $V \in L^1(\mathbb{R})$, $H$ is essentially self-adjoint on the domain

$$\{ f \in L^2(\mathbb{R}) | f,f' \text{are a.c and } -f'' + Vf \in L^2(\mathbb{R})\}$$ 

This means that $e^{-iH\beta(t)}$ is a unitary operator. Additionally, define $P_{ac}$ as the projection onto the absolutely continuous spectral subspace.

\begin{theorem***}
Assume our system has no resonance at zero energy. Then,

$$||e^{-i\beta(t)H}P_{ac}(H)||_{1 \rightarrow \infty} \lesssim |\beta(t)|^{\frac{-1}{2}}$$
\end{theorem***}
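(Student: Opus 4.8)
The plan is to reduce the stochastic estimate to a purely deterministic dispersive bound, and then to let the randomness enter only through a pointwise substitution. Since the excerpt has established via the functional calculus that the solution operator is $e^{-i\beta(t)H}$, and since for each fixed time $t$ and each fixed realization of the Brownian path the quantity $\beta(t)$ is simply a real scalar $s$, the operator $e^{-i\beta(t)H}P_{ac}(H)$ coincides with the deterministic Schr\"odinger propagator $e^{-isH}P_{ac}(H)$ evaluated at real time $s=\beta(t)$. It therefore suffices to prove the deterministic one-dimensional dispersive estimate
$$\|e^{-isH}P_{ac}(H)\|_{1\to\infty} \lesssim |s|^{-1/2}, \qquad s \in \mathbb{R},$$
with the implied constant independent of $s$, and then substitute $s=\beta(t)$ to obtain the claimed $|\beta(t)|^{-1/2}$ bound. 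The only property of $\beta$ used is that it is real-valued, so that uniformity of the deterministic constant in $s$ is exactly what makes the substitution legitimate.

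For the deterministic estimate I would follow the spectral approach of Goldberg et al. First I would represent the propagator through the spectral measure of $H$ on its absolutely continuous spectrum $[0,\infty)$, writing the Schwartz kernel as
$$\big[e^{-isH}P_{ac}(H)\big](x,y) = \int_0^\infty e^{-is\lambda}\, \frac{dE_{ac}}{d\lambda}(\lambda;x,y)\, d\lambda,$$
where the spectral density is recovered from the boundary values of the resolvent, $\tfrac{1}{2\pi i}\big[R(\lambda+i0)-R(\lambda-i0)\big]$. Changing variables $\lambda=\mu^2$ turns the phase into $e^{-is\mu^2}$ and expresses the density in terms of the Jost solutions $f_\pm(x,\mu)$, the solutions of $-f''+Vf=\mu^2 f$ with prescribed outgoing asymptotics, together with the transmission coefficient $T(\mu)$ and the reflection coefficients. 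The hypothesis $V\in L^1_1(\mathbb{R})$ guarantees that these Jost solutions and scattering coefficients are well controlled for $\mu$ bounded away from zero.

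The kernel then takes the form of an oscillatory integral $\int e^{-is\mu^2}\,a(\mu,x,y)\,d\mu$ with a quadratic phase, and I would extract the $|s|^{-1/2}$ decay by a stationary-phase / van der Corput argument applied uniformly in $x,y$ (the second derivative of the phase $\mu^2$ is bounded below, yielding the single power $|s|^{-1/2}$). The main obstacle — and the point where the no-resonance hypothesis enters — is the low-energy regime $\mu\to 0$: at the spectral threshold $\lambda=0$ the transmission coefficient $T(\mu)$ and hence the amplitude $a$ may become singular, and a genuine zero-energy resonance would destroy both the integrability near $\mu=0$ and the uniform decay. Assuming no resonance forces $T(\mu)$ to vanish appropriately as $\mu\to 0$, which tames the amplitude near the threshold and lets the oscillatory-integral estimate close. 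I expect essentially all of the real work to live in this threshold analysis; the high-energy contribution and the final substitution $s=\beta(t)$ are comparatively routine.
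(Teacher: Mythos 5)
Your proposal is correct and takes essentially the same approach as the paper: the paper likewise treats $\beta(t)$ as a fixed real time and reruns the spectral argument of Goldberg et al.\ (Stone's formula, the change of variables $\lambda \mapsto \lambda^2$, Jost solutions and the Wronskian at low energy, with the no-resonance hypothesis entering as $W(0) \neq 0$ so that Wiener's lemma controls the amplitude) with $\beta(t)$ substituted for $t$ throughout. The only cosmetic differences are that the paper handles the high-energy regime by a Born series expansion of the resolvent rather than by scattering coefficients, and extracts the $|\beta(t)|^{-1/2}$ decay from the explicit free Schr\"odinger kernel rather than from a van der Corput estimate.
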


\begin{corollary}
For $1 \leq p < 2$
$$||e^{-i\beta(t)H}P_{ac}(H)||_{L^p_{\Omega}L_x^1 \rightarrow L^p_{\Omega}L_x^{\infty}} \lesssim |t|^{\frac{-1}{4}}$$

where the $L^p$ norm is over the probability space $\Omega$ of $\beta(t)$

\end{corollary}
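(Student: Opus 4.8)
The plan is to deduce the corollary from the dispersive estimate of the preceding theorem by freezing the randomness, applying the pointwise (in $\omega$) bound, and then integrating the resulting power of $|\beta(t)|$ against the Gaussian law of the Brownian motion. First I would fix a sample point $\omega \in \Omega$. For such a fixed $\omega$ the quantity $\beta(t,\omega)$ is simply a real number, so that $e^{-i\beta(t,\omega)H}P_{ac}(H)$ is an ordinary deterministic operator on $L^1_x \to L^\infty_x$. Taking the initial datum $f$ to be deterministic, so that $\|f\|_{L^p_{\Omega}L^1_x} = \|f\|_{L^1_x}$, the theorem furnishes the pointwise-in-$\omega$ bound
$$\|e^{-i\beta(t,\omega)H}P_{ac}(H)f\|_{L^\infty_x} \lesssim |\beta(t,\omega)|^{-1/2}\,\|f\|_{L^1_x}.$$

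Next I would raise this inequality to the $p$-th power and integrate over $\Omega$. Because $\|f\|_{L^1_x}$ no longer depends on $\omega$, it factors out of the expectation, leaving
$$\big\|e^{-i\beta(t)H}P_{ac}(H)f\big\|_{L^p_{\Omega}L^\infty_x}^p \lesssim \|f\|_{L^1_x}^p \;\mathbb{E}\!\left[\,|\beta(t)|^{-p/2}\,\right].$$
The remaining task is to evaluate the negative moment $\mathbb{E}[|\beta(t)|^{-p/2}]$. Since $\beta(t)$ is a standard one-dimensional Brownian motion, $\beta(t)\sim\mathcal{N}(0,t)$, so I would write $\beta(t)=\sqrt{|t|}\,Z$ with $Z$ a standard normal variable and scale out the time dependence:
$$\mathbb{E}\!\left[\,|\beta(t)|^{-p/2}\,\right] = |t|^{-p/4}\,\mathbb{E}\!\left[\,|Z|^{-p/2}\,\right] = C_p\,|t|^{-p/4}, \qquad C_p = \frac{1}{\sqrt{2\pi}}\int_{\mathbb{R}}|z|^{-p/2}e^{-z^2/2}\,dz.$$
Taking $p$-th roots and combining gives $\|e^{-i\beta(t)H}P_{ac}(H)f\|_{L^p_{\Omega}L^\infty_x}\lesssim C_p^{1/p}\,|t|^{-1/4}\,\|f\|_{L^1_x}$, and taking the supremum over $f$ yields the claimed operator-norm bound.

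The hard part — really the only point requiring genuine care — is the convergence of the integral defining $C_p$, and this is precisely where the hypothesis $p<2$ enters. Near the origin the Gaussian density is positive and bounded, so the integrand behaves like $|z|^{-p/2}$, which is integrable on a neighborhood of $0$ exactly when $p/2<1$, i.e. when $p<2$. For $p\ge 2$ the negative moment diverges, because $\beta(t)$ places too much mass near $0$, and the estimate genuinely fails; thus the range $1\le p<2$ is sharp for this mechanism.

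I would also flag the modest structural point that the factorization in the second step relies on $f$ being deterministic, or at least independent of $\beta(t)$. Allowing $f$ to depend on $\omega$ arbitrarily would break the clean product bound, since the output norm could then be inflated by concentrating $f$ on the event where $|\beta(t)|$ is atypically small; restricting to deterministic initial data (the relevant case here, as in the final displayed inequality with $u_0$) removes this difficulty and lets the argument reduce cleanly to the single Gaussian computation above.
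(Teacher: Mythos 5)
Your proposal is correct and follows essentially the same route as the paper: both reduce the corollary to computing the $L^p_\Omega$ norm of the pointwise bound $|\beta(t)|^{-1/2}$ from Theorem 1, evaluate the resulting Gaussian integral by scaling out $\sqrt{t}$ (your substitution $\beta(t)=\sqrt{|t|}\,Z$ is exactly the paper's change of variables $u = x/\sqrt{t}$), and observe that integrability of $|z|^{-p/2}$ near the origin forces $p<2$, yielding the $|t|^{-1/4}$ decay. Your write-up is in fact slightly more careful than the paper's (which drops a minus sign in the Gaussian exponent and never addresses why the initial datum may be treated as deterministic), but the underlying argument is identical.
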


\begin{proof}
Using Theorem $1$, we just take the $L^p$ norm of $|\beta(t)|^{\frac{-1}{2}}$. This is equivalent to

$$ \left(\frac{2}{\sqrt{2t\pi}} \int_0^{\infty} x^{\frac{-p}{2}} e^{\frac{x^2}{2t}} \, \text{d}x \right)^{\frac{1}{p}}$$

We can now substitute in $u = \frac{x}{\sqrt{t}}$, yielding

$$\left( \frac{2}{\sqrt{2\pi}}t^{\frac{-p}{4}} \int_0^{\infty} u^{\frac{-p}{2}}e^{\frac{u^2}{2}} \, \text{d}u \right) ^{\frac{1}{p}}$$

The integral converges when $p<2$, leading to growth bounded by $|t|^{\frac{-1}{4}}$
\end{proof}

To prove Theorem $1$, the concepts of resonance and energy must still be explained first.

Define Jost solutions \cite{Goldberg} $f_{\pm}(z,\cdot)$ as solutions to 

$$f_{\pm}^{''}(\lambda,x) + V(x)f_{\pm}(\lambda,x) = \lambda^2_{\pm}(\lambda,x)$$

 satisfying $|f_{\pm}(\lambda,x)-e^{\pm ix\lambda}| \rightarrow 0$ as $x \rightarrow \pm \infty$. For $V \in L_1^1(\mathbb{R})$, these Jost solutions exist for all $\lambda \in \mathbb{R}$. Define the Jost solutions of $\lambda$ as having energy $\lambda^2$

Next, define the Wronskian $W(f_1,...,f_n)$ of $n$ functions as the determinant formed by the functions and their derivatives up to order $n-1$. Denote $W(\lambda)$ as $W(f_{+}(\lambda,\cdot),f_{-}(\lambda,\cdot))$. There is a resonance at zero energy if $W(0) = 0$.

We proceed with the proof of Theorem 1 by adapting the method used by \cite{Goldberg}. We start by considering the high energy case.

\subsection{\centering High-energy Dispersive Estimates}

Let $\lambda_0 = ||V||_1^2$ and let $\chi$ be a smooth cut-off such that $\chi(\lambda)=0$ for $\lambda \leq \lambda_0$ and $\chi(\lambda)=1$ for $\lambda \geq 2\lambda_0$. 

\begin{lemma}
$$||e^{-i\beta(t)H}\chi(H)||_{1 \rightarrow \infty} \lesssim |\beta(t)|^{\frac{-1}{2}}$$
\end{lemma}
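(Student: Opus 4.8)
The plan is to exploit the fact that the stochastic propagator $e^{-i\beta(t)H}$ is, for each fixed realization of the Brownian path $\omega$, nothing but the deterministic Schr\"odinger evolution $e^{-isH}$ evaluated at the random time $s=\beta(t,\omega)$. Since the claimed bound is pathwise (both sides depend on $\omega$ through $\beta(t)$), it suffices to prove the \emph{deterministic} high-energy dispersive estimate $\|e^{-isH}\chi(H)\|_{1\to\infty}\lesssim|s|^{-1/2}$ uniformly in $s\neq 0$, and then substitute $s=\beta(t)$. This reduction is exactly why the stochastic bound carries the factor $|\beta(t)|^{-1/2}$ rather than $|t|^{-1/2}$, and it lets us import the deterministic machinery of \cite{Goldberg} essentially verbatim.

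For the deterministic bound I would follow Goldberg--Schlag. Using the spectral representation of $P_{ac}(H)$ in terms of the Jost solutions $f_\pm(\lambda,\cdot)$, the kernel of $e^{-isH}\chi(H)$ can be written as an integral over $\lambda\in(0,\infty)$ of $e^{-is\lambda^2}$ against the spectral density, which is assembled from products $f_+(\lambda,x)f_-(\lambda,y)$ divided by the Wronskian $W(\lambda)$. The cutoff $\chi$ restricts the $\lambda$-integration to the genuinely high-energy region $\lambda^2\geq\lambda_0=\|V\|_1^2$.

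The central step is to expand in this regime. On $\lambda^2\geq\lambda_0$ the Wronskian stays bounded away from zero (it behaves like $2i\lambda$ as $\lambda\to\infty$), so $1/W(\lambda)$ is a well-controlled symbol and no resonance obstruction arises. Expanding the Jost functions and $1/W(\lambda)$ in a Born--Neumann series in $V$, the choice $\lambda_0=\|V\|_1^2$ guarantees that the relevant operator norm is at most $1/2$, so the series converges geometrically. Each term then reduces to an oscillatory integral of the form $\int_0^\infty e^{-is\lambda^2}e^{i\lambda p}\,a(\lambda)\,d\lambda$, where $p\in\mathbb{R}$ is a phase shift built from $x$, $y$, and the points at which $V$ is sampled, and $a$ is a smooth symbol supported in $\{\lambda\gtrsim\sqrt{\lambda_0}\}$. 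A van der Corput / stationary-phase estimate bounds each such integral by $C|s|^{-1/2}$ with $C$ \emph{independent} of the linear phase parameter $p$; summing the geometric series yields the uniform $|s|^{-1/2}$ bound on the kernel, hence on the $L^1\to L^\infty$ norm.

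The main obstacle is making the oscillatory-integral estimate uniform in all spatial and potential parameters simultaneously while controlling the convergence of the perturbation series. One must verify that the van der Corput constant does not depend on $p$---which holds because the quadratic phase $s\lambda^2$ alone supplies the decay, the linear term $\lambda p$ merely shifting the stationary point---and that the $V$-dependent factors in each series term are integrable and summable, which is precisely where the hypothesis $V\in L^1_1(\mathbb{R})$ and the definition of $\lambda_0$ enter. Once the deterministic estimate is established uniformly in $s$, the substitution $s=\beta(t)$ is immediate and completes the proof of the lemma.
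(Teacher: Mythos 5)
Your proposal shares its skeleton with the paper's proof, and the overall strategy is sound: treat $\beta(t)$ as a frozen time $s$ (the paper does this implicitly; making the pathwise reduction explicit is a genuine clarification), expand perturbatively in $V$ so that the cutoff $\lambda_0=\|V\|_1^2$ makes the expansion summable, and bound each term by an oscillatory-integral estimate for the phase $-s\lambda^2+p\lambda$ that is uniform in the linear parameter $p$. The differences are in the two technical pillars. First, the paper never touches the Jost solutions or the Wronskian in the high-energy regime: it inserts the resolvent Born series $\langle R_V(\lambda\pm i0)f,g\rangle=\sum_{n}\langle R_0(\lambda\pm i0)(-VR_0(\lambda\pm i0))^{n}f,g\rangle$ into Stone's formula, so the $n$-th term has the fully explicit kernel $(2\lambda)^{-(n+1)}\int\prod_{j}V(x_j)\,e^{-i\lambda(|x-x_1|+|y-x_n|+\sum_k|x_k-x_{k-1}|)}\,dx_1\cdots dx_n$, and the geometric ratio $\|V\|_1/(2\sqrt{\lambda})\le\tfrac12$ is immediate from the exact size $\tfrac{1}{2\sqrt{\lambda}}$ of the free resolvent kernel. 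If you instead expand $m_\pm$ and $1/W$ separately, as written, the ratio-$\tfrac12$ claim does not follow from the crude bounds: the Volterra estimates for $m_\pm$ carry factors like $e^{\|V\|_1/|\lambda|}$, and near the edge $\lambda^2\approx\lambda_0$ of the cutoff the Neumann series for $1/W$ is not obviously convergent with these constants. This is precisely why the paper reserves the Jost/Wronskian representation for the \emph{low}-energy section, where it controls $1/W$ by Wiener's lemma (via Lemma 3.2) rather than by a Neumann series; of course, since $f_+f_-/W$ is the resolvent kernel, a full expansion in $V$ must reproduce the Born series, so your route can be repaired by expanding the resolvent directly. Second, the closing estimate: the paper does not invoke van der Corput, but recognizes $\int e^{-i(\beta(t)\lambda^2+a\lambda)}\chi_L(\lambda^2)\lambda^{-n}\lambda_0^{n/2}\,d\lambda$ as the free Schr\"odinger evolution at time $\beta(t)$, position $a$, applied to initial data $[\chi_L(\lambda^2)\lambda^{-n}\lambda_0^{n/2}]^{\vee}$, so that the $|\beta(t)|^{-1/2}$ bound follows from the free dispersive estimate once $\sup_{L\ge1,\,n\ge0}\|[\chi_L(\lambda^2)\lambda^{-n}\lambda_0^{n/2}]^{\vee}\|_{1}<\infty$ is checked (which the paper does case by case for $n=0$, $n=1$, $n\ge2$). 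Your van der Corput alternative needs instead $\sup_{n}(\|a_n\|_\infty+\|a_n'\|_{L^1})<\infty$ for the amplitudes $a_n=\chi_L(\lambda^2)\lambda^{-n}\lambda_0^{n/2}$, which also holds uniformly ($\|a_n\|_\infty\le1$ on the support, and differentiating $\lambda^{-n}$ contributes $n\lambda_0^{n/2}\int_{\sqrt{\lambda_0}}^{\infty}\lambda^{-n-1}\,d\lambda=1$), so either closing works: yours avoids computing inverse Fourier transforms, the paper's avoids differentiating amplitudes. With the expansion taken in the paper's (resolvent) form, your argument assembles to the lemma in the same way, by integrating out the sampling points of $V$ and summing the geometric series.
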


\vspace{0.5cm}

Note that the resolvent $R_V$ equals $(H-(\lambda+i\epsilon))^{-1}$. Define $R_V(\lambda \pm i0)(x) = \lim_{\epsilon \rightarrow 0^+} R_V(\lambda \pm i\epsilon)(x)$.  Thus, 
$$R_0(\lambda \pm i0)(x) = \frac{\pm i}{2\sqrt{\lambda}}e^{\pm i|x|\sqrt{\lambda}}$$

using Fourier analysis. We then apply the Born expansion, which says for any $L_1$ functions $f,g$
$$\langle R_V(\lambda \pm i0)f,g \rangle = \sum_{n=0}^{\infty} \langle R_0(\lambda \pm i0)(-VR_0(\lambda \pm i0))^nf,g\rangle$$

For $\lambda>0$, note that $R_V(\lambda-i0)g \in L^{\infty}$. Further expansion leads to

$$|\langle R_V(\lambda+i0)(VR_0(\lambda+i0))^nf,g\rangle | \leq (2\sqrt{\lambda})^{-n}||V||_1^n||f||_1||R_V(\lambda-i0)g||_{\infty}$$

We now introduce another function $\chi_L$. Let $L \geq 1$ and let $\phi$ be a smooth function with $\phi(\lambda)=1$ if $|\lambda| \leq 1$ and $\phi(\lambda)=2$ if $|\lambda| \geq 2$. Additionally, denote $E$ as the spectral measure and $E_{ac}$ as the absolutely continuous part. Note that $\chi_L(H)E(\, \text{d} \lambda) = \chi_L(H) E_{ac}(\, \text{d} \lambda)$

\begin{proposition}[Stone's Formula]

    \[
    \frac{1}{2}\bigg{(}\mu^{A}([a,b]) + \mu^A((a,b))\bigg{)} = \frac{1}{2\pi i} \int_a^b R(z + i0) -R(z-i0) \, \text{d}z \tag{*}
    \]

\end{proposition}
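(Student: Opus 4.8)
The plan is to build on the spectral theorem for the self-adjoint operator $A$, which furnishes a projection-valued measure $E$ with $A = \int_{\mathbb{R}} \lambda \, dE(\lambda)$ and hence $R(z) = (A-z)^{-1} = \int_{\mathbb{R}} (\lambda - z)^{-1}\, dE(\lambda)$. I read $\mu^A$ as the associated spectral measure (the projection-valued $E$, or its scalar version $\langle E(\cdot)\psi,\psi\rangle$ against a fixed vector $\psi$), and I interpret the boundary resolvents in the limiting sense $R(z\pm i0) = \lim_{\epsilon\to 0^+} R(z\pm i\epsilon)$, so that the right-hand side of $(*)$ is really $\lim_{\epsilon\to 0^+}\frac{1}{2\pi i}\int_a^b [R(z+i\epsilon)-R(z-i\epsilon)]\,dz$.

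First I would compute the integrand for fixed $\epsilon>0$. Putting the two resolvents over a common denominator gives
$$R(z+i\epsilon) - R(z-i\epsilon) = \int_{\mathbb{R}} \frac{2i\epsilon}{(\lambda - z)^2 + \epsilon^2}\, dE(\lambda),$$
so after dividing by $2\pi i$ the kernel becomes the Poisson kernel $\frac{1}{\pi}\frac{\epsilon}{(\lambda-z)^2+\epsilon^2}$. Next I would apply Fubini's theorem to interchange the $z$-integral over $[a,b]$ with the spectral integral; this is justified because, tested against any fixed vector, $\mu^A$ is a finite measure and the kernel is bounded on the compact interval $[a,b]$. The resulting inner integral evaluates to an arctangent difference,
$$\frac{1}{\pi}\int_a^b \frac{\epsilon}{(\lambda-z)^2+\epsilon^2}\, dz = \frac{1}{\pi}\left[\arctan\!\left(\frac{b-\lambda}{\epsilon}\right) - \arctan\!\left(\frac{a-\lambda}{\epsilon}\right)\right].$$

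The decisive step is taking $\epsilon\to 0^+$ inside the spectral integral. Pointwise in $\lambda$, the arctangent difference tends to $0$ for $\lambda \notin [a,b]$, to $1$ for $\lambda \in (a,b)$, and to $\tfrac12$ at each endpoint $\lambda \in \{a,b\}$; this limiting profile is exactly $\tfrac12\big(\mathbf{1}_{[a,b]} + \mathbf{1}_{(a,b)}\big)$. Since the arctangent difference is uniformly bounded by $1$ and $\mu^A$ is finite against a fixed vector, the dominated convergence theorem lets me pass the limit through the integral, yielding
$$\lim_{\epsilon\to 0^+}\frac{1}{2\pi i}\int_a^b [R(z+i\epsilon)-R(z-i\epsilon)]\,dz = \int_{\mathbb{R}} \tfrac12\big(\mathbf{1}_{[a,b]}+\mathbf{1}_{(a,b)}\big)\, dE(\lambda) = \tfrac12\big(\mu^A([a,b])+\mu^A((a,b))\big),$$
which is precisely $(*)$.

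I expect the main obstacle to be the careful handling of the endpoints together with the mode of convergence. The half-weighting of $[a,b]$ and $(a,b)$ is exactly what the symmetric $\epsilon\to 0$ limit produces at $\lambda = a$ and $\lambda = b$, so I must track those arctangent limits rather than discard them as a measure-zero set, since $\mu^A$ can carry atoms at the endpoints. I would also state clearly whether $(*)$ holds in the strong or weak operator topology; the cleanest route is to prove the identity after pairing with a fixed vector, which reduces everything to scalar finite measures where dominated convergence applies directly, and then to note that the resulting projection identity holds in the strong sense.
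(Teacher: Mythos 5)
Your proof is correct and follows essentially the same route as the paper's: both reduce $(*)$ via the spectral theorem/functional calculus to a scalar limit computation whose value is $0$ for $\lambda \notin [a,b]$, $1$ for $\lambda \in (a,b)$, and $\tfrac{1}{2}$ at the endpoints, which yields the half-weighted combination of $\mu^A([a,b])$ and $\mu^A((a,b))$. Your write-up is in fact more complete than the paper's: the explicit Poisson-kernel and arctangent evaluation, the Fubini interchange, and the dominated-convergence argument for passing $\epsilon \to 0^+$ through the spectral integral (after pairing with a fixed vector) are exactly the details the paper's appeal to ``functional calculus'' leaves implicit.
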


\begin{proof}

Using functional calculus, write the right side as

$$\lim_{\epsilon \rightarrow 0^+} \frac{1}{2\pi i}\int_a^b \left( \frac{1}{u-(z+i\epsilon)} - \frac{1}{u-(z-i\epsilon)} \right) \, \text{d}z$$

This expression equals $0$ if $u \notin [a,b]$ and equals $1$ if $u \in (a,b)$. Additionally, if $u = a$ or $u = b$, the expression equates to $\frac{1}{2}$. Thus, after applying functional calculus, this is equivalent to 

$$\frac{1}{2}\left(\mu^A([a,b]) + \mu^A((a,b))\right)$$

\end{proof}

From Stone's Formula, we know

$$\langle E_{ac}(\, \text{d}\lambda)f,g\rangle = \langle \frac{1}{2i\pi} [R_v(\lambda+i0)-R_V(\lambda-i0)]f,g \rangle \, \text{d}\lambda$$

We now make the change $\lambda \rightarrow \lambda^2$. Here, note the equality $R_0(\lambda^2-i0) = R_0((-\lambda)^2+i0)$, which allows us to extend the range of integration. 

$$|\langle e^{-i\beta(t)H}\chi_L(H)f,g\rangle| = |(2i\pi)^{-1}  \int_{0}^{\infty} \sum_{n=0}^{\infty}  e^{-i\beta(t)\lambda}\chi_L(\lambda)\lambda\langle (R_0(\lambda+i0)(VR_0(\lambda+i0))^n - R_0(\lambda-i0)(VR_0(\lambda-i0))^n)f,g\rangle \, \text{d}\lambda |$$

$$= |(2i\pi)^{-1} \sum_{n=0}^{\infty} \int_{0}^{\infty}   e^{-i\beta(t)\lambda}\chi_L(\lambda)\lambda\langle (R_0(\lambda+i0)(VR_0(\lambda+i0))^n - R_0(\lambda-i0)(VR_0(\lambda-i0))^n)f,g\rangle \, \text{d}\lambda |$$

$$= |(2i\pi)^{-1} \sum_{n=0}^{\infty} \int_{0}^{\infty}   e^{-i\beta(t)\lambda^2}\chi_L(\lambda^2)\lambda\langle (R_0(\lambda^2+i0)(VR_0(\lambda^2+i0))^n - R_0(\lambda^2-i0)(VR_0(\lambda^2-i0))^n)f,g\rangle \, \text{d}\lambda |$$

$$= |(2i\pi)^{-1} \sum_{n=0}^{\infty} \int_{-\infty}^{\infty} e^{-i\beta(t)\lambda^2}\chi_L(\lambda^2)\lambda\langle R_0(\lambda^2+i0)(VR_0(\lambda^2+i0))^nf,g\rangle \, \text{d}\lambda |$$

To work with this, first make the substitution

$$R_0(\lambda^2+i0)(VR_0(\lambda^2+i0))^n(x,y) = \frac{1}{(2\lambda)^{n+1}} \int_{\mathbb{R}^n} \prod_{j=1}^{n} V(x_j) e^{-i\lambda(|x-x_1|+|y-x_n|+\sum_{k=2}^{n} |x_k-x_{k-1}|)} \, \text{d} x_1 ... \, \text{d} x_n$$

We first integrate by $\lambda$, which yields

$$|\langle e^{-i\beta(t)H}\chi_L(H)f,g\rangle| \lesssim \sum_{n=0}^{\infty} (2\sqrt{\lambda_0})^{-n} \sup_{a \in \mathbb{R}} | \int_{-\infty}^{\infty} e^{-i(\beta(t)\lambda^2+a\lambda)}\chi_L(\lambda^2)\lambda^{-n}\lambda_0^{n/2} \, \text{d}\lambda||||V||_1^n||f||_1||g||_1$$

Note that the quantity inside the absolute value is equivalent to the solution of equation \ref{eq1} at time $t$ and position $a$, and initial data $[\chi_L(\lambda^2)\lambda^{-n}\lambda_0^{n/2}]^{\vee}$

\vspace{0.5cm}

For equation \ref{eq1}, the solution given initial condition $f_0$ is given by $\frac{1}{\sqrt{-4i\beta(t)}} \int_{\mathbb{R}} e^{\frac{-i|x-y|^2}{\beta(t)}} f_0 \, \text{d}y$. The solution grows at rate $|\beta(t)|^{\frac{-1}{2}}$ if we disregard the initial condition component. Here, the initial condition is $f_0 = \chi_L(\lambda^2)\lambda^{-n}\lambda_0^{n/2}$. Thus, we must show that $\sup_{L \geq 1,n \geq 0} ||[\chi_L(\lambda^2)\lambda^{-n}\lambda_0^{n/2}]^{\vee}||_1 < \infty$

where $\vee$ represents the inverse Fourier Transform.

If $n=0$, this yields

$$||[\chi_L(\lambda^2)]^{\vee}||_1 \leq ||L\widehat{\phi(\lambda^2)}(L\zeta)||_1(1+||(1-\chi(\lambda^2))^{\vee}||_1)<\infty$$

For $n \geq 2$,

$$||[\chi_L(\lambda^2)\lambda^{-n}]^{\vee}(\zeta)||_{\infty} \leq ||\chi_L(\lambda^2)\lambda^{-n}||_1\leq C(\lambda_0)\lambda_0^{-n/2}$$

where $C(\lambda_0)$ only depends on $\lambda_0$

For the $n=1$ case,

$$||[\chi_L(\lambda^2)\lambda^{-1}]^{\vee}||_{\infty} \leq ||[\chi_L(\lambda^2)]^{\vee}||_1||[\lambda^{-1}]^{\vee}||_{\infty} < \infty$$

when $L \geq 1$

Thus, 

$$\sup_{L \geq 1,n \geq 0} ||[\chi_L(\lambda^2)\lambda^{-n}\lambda_0^{n/2}]^{\vee}||_1 < \infty$$

concluding the high-energy part and Lemma $3.1$.

\subsection{\centering Low-energy Dispersive Estimates}

For low-energy portion, note that for $x<y$,

$$R_V(\lambda^2 \pm i0)(x,y) = \frac{f_{+}(\pm \lambda,y)f_{-}(\pm \lambda,y)}{W(\pm \lambda)}$$

where $f$ are the Jost-solutions \cite{Goldberg}. 

Using Stone's formula,

$$2i\pi \int_{0}^{\infty} e^{-i\beta(t)\lambda}\chi(\lambda)E_{ac}(\, \text{d} \lambda)(x,y) $$

$$= \int_{0}^{\infty} e^{-i\beta(t)\lambda^2}\lambda \chi(\lambda^2) [\frac{f_{+}(\lambda,y)f_{-}(\lambda,x)}{W(\lambda)}-\frac{f_{+}(-\lambda,y)f_{-}(-\lambda,x)}{W(-\lambda)}] \, \text{d} \lambda$$

$$= \int_{-\infty}^{\infty} e^{-i\beta(t)\lambda^2}\lambda \chi(\lambda^2) \frac{f_{+}(\lambda,y)f_{-}(\lambda,x)}{W(\lambda)} \, \text{d} \lambda$$

Write $f_{\pm}(\lambda,x)$ as $e^{\pm ix\lambda}m_{\pm}(\lambda,x)$. Using \cite{Deift}, the function $m_{\pm}(z,x)-1$ belongs to the Hardy Space on the upper half plane, allowing for various estimates on its Fourier Transform in the first variable. Let $m_{\pm}(\hat{\zeta},x)$ denote this Fourier Transform in the first variable. 

We take the following Lemma from \cite{Goldberg}.

\begin{lemma}
Let $V \in L_j^1(\mathbb{R})$, $j=1,2$ and $\tilde{\chi}$ be a smooth, compactly supported cut-off function which is $1$ on the support of $\chi$. Then $\tilde{\chi}(\lambda)W(\lambda)$ and $W[f_{+}(\lambda,\cdot),f_{-}(-\lambda,\cdot)]$ both have Fourier transform in $L_{j-1}^1(\mathbb{R})$
\end{lemma}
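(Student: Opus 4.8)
The plan is to reduce each Wronskian to an explicit integral of $V$ against the Jost function $m_+$, and then read the weighted $L^1$ bound on its $\lambda$-Fourier transform off the transformation-operator representation of $m_+$. First I would record the Volterra equation $m_+(\lambda,x) = 1 + \int_x^\infty \frac{e^{2i\lambda(y-x)}-1}{2i\lambda}V(y)m_+(\lambda,y)\,\text{d}y$ and solve it by Neumann (Born) iteration. Writing $\frac{e^{2i\lambda s}-1}{2i\lambda} = \int_0^s e^{2i\lambda u}\,\text{d}u$ recasts the solution as $m_+(\lambda,x) - 1 = \int_0^\infty B_+(x,u)e^{2i\lambda u}\,\text{d}u$ for a kernel $B_+$ supported on $u>0$; this is the quantitative form of the Hardy-space statement of \cite{Deift} quoted above, and its $\lambda$-transform is, after the harmless rescaling $u \mapsto 2u$, just $B_+(x,\cdot)$. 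The leading Born term is $B_+(x,u) = \int_{x+u}^\infty V(z)\,\text{d}z$, and the full series is dominated by the corresponding iterated integrals of $|V|$.

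Next I would obtain the algebraic formulas for the two Wronskians. Since each is independent of $x$ (both entries solve the equation at energy $\lambda^2$), I can evaluate in the limit $x \to -\infty$, where $m_-\to 1$, $m_-'\to 0$ and $f_+(\lambda,x) \sim a(\lambda)e^{i\lambda x} + b(\lambda)e^{-i\lambda x}$. The boundary terms collapse to $W(\lambda) = -2i\lambda\,a(\lambda)$ and $W[f_+(\lambda,\cdot),f_-(-\lambda,\cdot)] = 2i\lambda\,b(\lambda)$, and reading $a,b$ off the integral equation gives $W(\lambda) = -2i\lambda + \int_{\mathbb{R}} V(y)m_+(\lambda,y)\,\text{d}y$ together with $W[f_+(\lambda,\cdot),f_-(-\lambda,\cdot)] = \int_{\mathbb{R}} e^{2i\lambda y}V(y)m_+(\lambda,y)\,\text{d}y$. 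Crucially the growing prefactor $-2i\lambda$ cancels in the second identity, which is exactly why only the first needs the cutoff $\tilde\chi$.

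The Fourier analysis is then routine. The term $-2i\lambda\,\tilde\chi(\lambda)$ is smooth with compact support, hence Schwartz, so its transform lies in every weighted $L^1$. Substituting the kernel representation of $m_+$ into the integral term of $W(\lambda)$ and interchanging integrations turns it into $\int_0^\infty G(u)e^{2i\lambda u}\,\text{d}u$ with $G(u) = \int_{\mathbb{R}} V(y)B_+(y,u)\,\text{d}y$, so the weighted norm of its transform is controlled by $\int_0^\infty (1+u)^{j-1}\int_{\mathbb{R}}|V(y)||B_+(y,u)|\,\text{d}y\,\text{d}u$. Using $B_+(y,u)=\int_{y+u}^\infty V$, the inner $u$-integral against the weight $(1+u)^{j-1}$ produces a factor $(1+|z-y|)^{j}$, and the submultiplicativity $(1+|z-y|)^{j}\le (1+|y|)^{j}(1+|z|)^{j}$ bounds everything by $\|V\|_{L_j^1}^2$; the higher Born terms sum the same way. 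The second Wronskian is handled identically, the only change being that the phase $e^{2i\lambda(y+u)}$ places the transform's mass near $\zeta = 2(y+u)$ with $y < y+u < z$, which again yields the bound $\|V\|_{L_j^1}^2$. Finally, multiplying by $\tilde\chi$ amounts to convolution with a Schwartz function, and $L_{j-1}^1(\mathbb{R})$ is closed under convolution because $(1+|\zeta|)^{j-1}$ is submultiplicative for $j\in\{1,2\}$, so the result remains in $L_{j-1}^1$.

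The main obstacle is the first step: establishing the weighted kernel estimates for $B_+$ uniformly along the Born iteration, since this is precisely where the single weight loss $j\to j-1$ is forced and where the hypothesis $V\in L_j^1$ is actually consumed. Once those estimates are in hand — the quantitative engine supplied by \cite{Deift,Goldberg} — the passage to the integral formulas and the convolution bookkeeping are comparatively mechanical.
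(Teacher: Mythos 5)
The paper does not actually prove this lemma: it is imported verbatim from \cite{Goldberg} (``We take the following Lemma from \cite{Goldberg}''), so there is no internal argument to compare against. Your sketch is, in effect, a reconstruction of the proof in that reference, and its skeleton is sound. The Wronskian identities check out: evaluating at $x \to -\infty$ with $f_+(\lambda,x) \sim a(\lambda)e^{i\lambda x} + b(\lambda)e^{-i\lambda x}$ gives $W(\lambda) = -2i\lambda\,a(\lambda)$ and $W[f_+(\lambda,\cdot),f_-(-\lambda,\cdot)] = 2i\lambda\,b(\lambda)$, and reading $a,b$ off the Volterra equation yields $W(\lambda) = -2i\lambda + \int V(y)m_+(\lambda,y)\,\mathrm{d}y$ and $2i\lambda\,b(\lambda) = \int e^{2i\lambda y}V(y)m_+(\lambda,y)\,\mathrm{d}y$; your observation that the $2i\lambda$ cancellation in the second identity is precisely why it needs no cutoff $\tilde\chi$ is the right structural point. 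The weight bookkeeping $(1+u)^{j-1}\int_{y+u}^\infty|V| \rightsquigarrow (1+|z-y|)^j \le (1+|y|)^j(1+|z|)^j$ is also correct and shows exactly how one weight is lost ($j \to j-1$) and why $V \in L_j^1$ is the natural hypothesis. Two small touch-ups: the $\lambda$-independent pieces ($\int V$ in $W$, and $\widehat{V}(-2\lambda)$ coming from the ``$1$'' in $m_+ = 1 + (m_+-1)$ in the second Wronskian) should be dispatched explicitly — the first is absorbed by $\tilde\chi$, the second lands in $L_j^1 \subset L_{j-1}^1$ directly — and the gap you flag yourself, namely the uniform weighted estimates on the kernels $B_+$ through the Born series, is genuine but is exactly the content of the Deift--Trubowitz transformation-operator lemma, so deferring it to \cite{Deift} is legitimate rather than circular. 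In short: where the paper cites, you prove; the proof is the standard one, and it holds up at the level of detail given.
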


But we can now apply Lemma $3.2$ to equation \ref{eq3} to yield another lemma for Strichartz estimates. 
\begin{lemma}
$$\sup_{x<y}|\int_{-\infty}^{\infty} e^{-i\beta(t)\lambda^2}\frac{\lambda \chi(\lambda)}{W(\lambda)}f_{+}(\lambda,y)f_{-}(\lambda,x) \, \text{d}\lambda| \lesssim |\beta(t)|^{\frac{-1}{2}}$$
\end{lemma}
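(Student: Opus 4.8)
The plan is to reduce this low-energy kernel to the free one-dimensional dispersive estimate, mirroring the argument already used for Lemma 3.1. First I would strip the oscillation from the Jost solutions. Writing $f_+(\lambda,y)=e^{i\lambda y}m_+(\lambda,y)$ and $f_-(\lambda,x)=e^{-i\lambda x}m_-(\lambda,x)$, the product factors as $e^{i\lambda(y-x)}m_+(\lambda,y)m_-(\lambda,x)$. Because the supremum runs over $x<y$, the displacement $a:=y-x$ is strictly positive, and the integral becomes
$$\int_{-\infty}^{\infty} e^{-i\beta(t)\lambda^2}\,e^{ia\lambda}\,A_{x,y}(\lambda)\,\text{d}\lambda,\qquad A_{x,y}(\lambda):=\frac{\lambda\chi(\lambda)}{W(\lambda)}\,m_+(\lambda,y)\,m_-(\lambda,x).$$
The phase $-\beta(t)\lambda^2+a\lambda$ is exactly the free Schrodinger phase in the momentum variable $\lambda$, so this integral is (up to a fixed constant) the solution of equation \ref{eq1} with $V=0$, evaluated at time $\beta(t)$ and spatial point $a$, with initial data $[A_{x,y}]^\vee$.

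Granting that identification, the free dispersive bound recalled in the high-energy section produces decay at the rate $|\beta(t)|^{-1/2}$ times $\|[A_{x,y}]^\vee\|_{L^1}$. The entire lemma therefore collapses to the uniform estimate
$$\sup_{x<y}\big\|[A_{x,y}]^\vee\big\|_{L^1}<\infty,$$
the inverse transform being taken in $\lambda$. I would prove this inside the Wiener algebra $\mathcal{F}L^1$ of functions with integrable Fourier transform, which is a Banach algebra under pointwise multiplication; it then suffices to exhibit each factor of $A_{x,y}$ as an element of $\mathcal{F}L^1$ whose norm is controlled independently of $x$ and $y$.

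The symbol $\lambda\chi(\lambda)$ is fixed, smooth, and compactly supported in the low-energy region, hence lies in $\mathcal{F}L^1$. For the reciprocal Wronskian, Lemma 3.2 supplies that $\tilde\chi(\lambda)W(\lambda)\in\mathcal{F}L^1$, where $\tilde\chi\equiv1$ on $\operatorname{supp}\chi$; since the no-resonance hypothesis forces $W(0)\neq0$ and $W$ is continuous and nonvanishing on the compact support of $\chi$, the Wiener--L\'evy theorem gives $\chi/W\in\mathcal{F}L^1$. For the Jost multipliers I would call on the Deift--Trubowitz / Hardy-space estimates cited through \cite{Deift}: each $m_\pm(\cdot,\cdot)-1$ has a Fourier transform in $\lambda$ supported on a half-line with $L^1$ norm dominated by $\|V\|_{L^1_1}$ uniformly in the base point. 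Expanding $m_+(\lambda,y)m_-(\lambda,x)=1+(m_+-1)+(m_--1)+(m_+-1)(m_--1)$ and multiplying by the compactly supported symbol $\lambda\chi/W$ produces four terms, each a product of $\mathcal{F}L^1$ elements; closure under products then yields the desired uniform membership, and hence the bound above.

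The step I expect to be the genuine obstacle is this last uniform-in-$(x,y)$ control. The Fourier supports of $m_+(\cdot,y)-1$ and $m_-(\cdot,x)-1$ sit on the same half-line, and one must verify that the uniform $L^1$ Fourier bounds of \cite{Deift} survive multiplication both by the weight $\lambda$ and by $1/W$; here the sign condition $x<y$ is essential, since the positive shift $e^{ia\lambda}$ with $a>0$ keeps the resulting frequency content aligned so that no loss occurs when the integral is read as a free propagator. Securing the Wiener--L\'evy inversion of $W$ together with these uniform Deift estimates is where the analytic care concentrates; once they are in hand, the free dispersive estimate closes the argument.
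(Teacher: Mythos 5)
Your reduction to the free dispersive estimate, the factorization $f_+f_-=e^{i\lambda(y-x)}m_+m_-$, and the Wiener-algebra treatment of $\lambda\chi(\lambda)/W(\lambda)$ via Lemma 3.2 and Wiener's lemma are exactly the paper's mechanism, but there is a genuine gap at precisely the point you flag and then wave past: the Deift--Trubowitz bounds on the $m_\pm$ are \emph{not} uniform in the base point. From the Volterra integral equation for $m_+$, the relevant control is of the form $\|[m_+(\cdot,y)-1]^\vee\|_{L^1}\lesssim \int_y^\infty(1+|u-y|)|V(u)|\,\mathrm{d}u$, which is bounded by $\|V\|_{L^1_1}$ only for $y\geq 0$ and grows like $(1+|y|)\|V\|_{L^1_1}$ as $y\to-\infty$; symmetrically, $\|[m_-(\cdot,x)-1]^\vee\|_{L^1}$ is uniform only for $x\leq 0$ and degrades linearly as $x\to+\infty$. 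Hence your key claim $\sup_{x<y}\|[A_{x,y}]^\vee\|_{L^1}<\infty$ is false as stated: for $0<x<y$ with $x$ large, the factor $m_-(\cdot,x)$ admits no uniform $\mathcal{F}L^1$ bound. The sign condition $x<y$ cannot repair this --- the loss is in the $L^1$ norm of the Fourier transform, not in the location of its support, so the ``aligned frequency content'' remark does no work.

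This failure is exactly why the paper (following Goldberg--Schlag) splits into the three cases $x<0<y$, $0\leq x<y$, and $x<y\leq 0$. Your argument is correct, and coincides with the paper's, only in the first case, where each factor sits on its good half-line. In the case $0\leq x<y$ the paper eliminates the ill-controlled factor entirely via the connection formula $f_-(\lambda,x)=\alpha(\lambda)f_+(\lambda,x)+\beta(\lambda)f_+(-\lambda,x)$, with $\beta(\lambda)=\frac{W(\lambda)}{-2i\lambda}$ and $\alpha(\lambda)=\frac{-1}{2i\lambda}W[f_-(\lambda,\cdot),f_+(-\lambda,\cdot)]$, so that only $m_+$ evaluated at the nonnegative points $x,y$ appears. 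The apparent $1/\lambda$ singularities are harmless: in the $\beta$-term the kernel's factor $\lambda/W(\lambda)$ cancels exactly, leaving $\chi(\lambda)m_+(\lambda,y)m_+(-\lambda,x)$ up to a constant, while in the $\alpha$-term one is left with a ratio of Wronskians, $W[f_-(\lambda,\cdot),f_+(-\lambda,\cdot)]/(\tilde\chi(\lambda) W(\lambda))$, which Lemma 3.2 together with Wiener's lemma again places in $\mathcal{F}L^1$ with uniform norm. The case $x<y\leq 0$ is symmetric. This case analysis and scattering-coefficient substitution is the missing idea in your proposal, not a technical verification to be checked at the end; without it the lemma is not proved.
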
 

\begin{proof}
We must consider the three cases $x < 0 < y$, $0 < x < y$, and $x < y < 0$.

If $x < 0 < y$:

$$\sup | \int_{-\infty}^{\infty} e^{-i\beta(t)\lambda^2} \frac{\lambda \chi(\lambda)}{W(\lambda)}f_+(\lambda,y)f_-(\lambda,x) \, \text{d}\lambda|$$

$$= \sup | \int_{-\infty}^{\infty} e^{-i\beta(t)\lambda^2} e^{-i\lambda(x-y)} \frac{\lambda \chi(\lambda)}{\tilde{\chi}(\lambda)W(\lambda)}m_+(\lambda,y)m_-(\lambda,x) \, \text{d}\lambda|$$

$$\lesssim |\beta(t)|^{\frac{-1}{2}} $$

The last inequality follows in the same way as the high-energy part we apply the dispersive bound from the Schrodinger equation and

$$\sup ||[\frac{\lambda \chi(\lambda)}{\tilde{\chi}(\lambda)W(\lambda)}m_+(\lambda,y)m_-(\lambda,x)]^{\vee}|| < \infty$$

This comes from Lemma $4$ where $[\tilde{\chi}W]^{\vee} \in L^1(\mathbb{R})$ allows us to apply Wiener's lemma so that $\frac{\lambda \chi(\lambda)}{\tilde{\chi}(\lambda)W(\lambda)}$ is the Fourier Transform of an $L^1$ function.

The other two cases are similar due to symmetry, so we consider the case $0 \leq x < y$. 

Write $f_-(\lambda,x) = \alpha(\lambda)f_+(\lambda,x)+\beta(\lambda)f_+(-\lambda,x)$ with $\beta(\lambda)=\frac{W(\lambda)}{-2i\lambda}$ and $\alpha(\lambda) = \frac{-1}{2i\lambda}W[f_-(\lambda,\cdot),f_+(-\lambda,\cdot)]$

Thus, 

$$\sup | \int_{-\infty}^{\infty} e^{-i\beta(t)\lambda^2} \frac{\lambda \chi(\lambda)}{\tilde{\chi}(\lambda)W(\lambda)}f_+(\lambda,y)f_-(\lambda,x) \, \text{d}\lambda|$$

$$\lesssim \sup | \int_{-\infty}^{\infty} e^{-i\beta(t)\lambda^2} e^{-i\lambda(x+y)} \frac{\lambda \alpha(\lambda)}{\tilde{\chi}(\lambda)W(\lambda)}\chi(\lambda)m_+(\lambda,y)m_+(\lambda,x) \, \text{d}\lambda| + | \int_{-\infty}^{\infty} e^{-i\beta(t)\lambda^2} e^{-i\lambda(y-x)} \chi(\lambda)m_+(\lambda,y)m_+(-\lambda,x) \, \text{d}\lambda|$$

$$ \lesssim |\beta(t)|^{\frac{-1}{2}}$$
\end{proof}

\section{\centering Strichartz Estimates}

In the previous section, we developed various dispersive estimates. These can be used to create Strichartz estimates for equation $\ref{eq3}$.

\begin{lemma}

Let $u_s$ be some initial condition at time $s$. Let $S(t,s)$ be the solution operator for equation \ref{eq3} given initial condition at time $s$ with $s \leq t$, which we know is equal to $e^{\i(t-s)\beta(t-s)}$ Then for any $p \geq 2$, $S(t,s)$ maps $L_x^{p'}$ into $L_x^p$ with constant $C_p$ such that

$$|S(t,s)P_{ac}(H)u_s|_{L_x^p} \leq \frac{C_p}{|\beta(t)-\beta(s)|^{\frac{1}{2}-\frac{1}{p}}} |u_s|_{L_x^{p'}}$$
\end{lemma}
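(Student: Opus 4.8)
The plan is to obtain this estimate by Riesz--Thorin interpolation between the two endpoint bounds already available to us: the dispersive $L^1 \to L^\infty$ estimate of Theorem 1 and the trivial $L^2 \to L^2$ bound coming from the unitarity of the propagator.

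First I would observe that the solution operator from time $s$ to time $t$ is $S(t,s) = e^{-i(\beta(t)-\beta(s))H}$, so it is convenient to set $\tau = \beta(t)-\beta(s)$ and regard $S(t,s)$ as $e^{-i\tau H}$ for a fixed real number $\tau$. Theorem 1 was established pathwise, i.e.\ for each fixed value of the argument of the functional calculus, so it applies verbatim with $\beta(t)$ replaced by the increment $\tau$, giving
$$\|S(t,s)P_{ac}(H)\|_{1 \to \infty} = \|e^{-i\tau H}P_{ac}(H)\|_{1\to\infty} \lesssim |\tau|^{-1/2} = |\beta(t)-\beta(s)|^{-1/2}.$$
At the other endpoint, since $V \in L^1(\mathbb{R})$ makes $H$ essentially self-adjoint, $e^{-i\tau H}$ is unitary on $L^2$, and $P_{ac}(H)$ is an orthogonal projection, whence
$$\|S(t,s)P_{ac}(H)\|_{2 \to 2} \leq 1.$$

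Next I would apply Riesz--Thorin interpolation to the operator $T = S(t,s)P_{ac}(H)$ between the pairs $(L^1,L^\infty)$ with norm $M_0 \lesssim |\beta(t)-\beta(s)|^{-1/2}$ and $(L^2,L^2)$ with norm $M_1 \leq 1$. Choosing the interpolation parameter $\theta = 2/p$ sends the target exponent to $q_\theta = p$ and the source exponent to $p_\theta = p'$, the H\"older conjugate, since $\tfrac{1}{q_\theta} = \tfrac{\theta}{2} = \tfrac{1}{p}$ and $\tfrac{1}{p_\theta} = (1-\theta) + \tfrac{\theta}{2} = 1 - \tfrac{1}{p}$. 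The interpolated operator norm is then bounded by $M_0^{1-\theta}M_1^{\theta} \lesssim |\beta(t)-\beta(s)|^{-\frac{1-\theta}{2}}$, and since $\tfrac{1-\theta}{2} = \tfrac{1}{2} - \tfrac{1}{p}$, this is exactly the claimed decay exponent. Absorbing the implicit constants into a single $C_p$ yields the stated inequality.

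The step requiring the most care is the transfer of Theorem 1 to the increment $\tau = \beta(t)-\beta(s)$ rather than to $\beta(t)$ itself; I expect this to be immediate once one notes that the dispersive bound is a statement about the \emph{deterministic} operator $e^{-i\tau H}$ valid for every real $\tau$, so no probabilistic argument is needed here (the randomness enters only later, when one integrates $|\beta(t)-\beta(s)|^{-(1/2-1/p)}$ against the law of the Brownian increment). The remaining bookkeeping point is confirming that $P_{ac}(H)$ enters as a fixed factor in both endpoint estimates and is bounded on the relevant $L^p$ spaces, so that interpolation applies to the composite operator without difficulty.
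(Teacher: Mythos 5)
Your proposal is correct and follows essentially the same route as the paper: the paper's proof also combines the $L^1 \to L^\infty$ dispersive bound of Theorem 1 (at the endpoint $p = \infty$) with the $L^2_x$ isometry of $S(t,s)$ and concludes by Riesz--Thorin interpolation. Your write-up merely makes explicit the exponent arithmetic and the pathwise transfer of Theorem 1 to the increment $\beta(t)-\beta(s)$, both of which the paper leaves implicit.
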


\begin{proof}
    Theorem $1$ covers the case $p = \infty$. Additionally, $S(t,s)$ is an isometry on $L_x^2$. Thus, the result then follows from Riesz-Thorin Interpolation.
\end{proof}

From \cite{DeBouard}, we take the following lemma without proof.

\begin{lemma}
Let $\alpha \in [0,1)$, then there exists a constant $c_{\alpha}$ depending only on $\alpha$ such that for any $T \geq 0$ and $f \in L_{\mathcal{P}}^2(\Omega; L^2(0,T))$

$$\mathbb{E} \left( \int_0^T \left( \int_0^t \frac{1}{|\beta(t)-\beta(s)|^{\alpha}} |f(s)| \, \text{d}s \right)^2 \, \text{d}t \right) \leq c_{\alpha} T^{2-\alpha} \mathbb{E} \left( \int_0^T |f(s)|^2 \, \text{d}s \right)$$
\end{lemma}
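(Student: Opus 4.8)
The plan is to expand the square and reduce the random weights to a \emph{deterministic} effective kernel, after which a deterministic Hardy--Littlewood--Sobolev (Schur) estimate on $[0,T]$ produces the factor $T^{2-\alpha}$. Since the integrand is nonnegative, Tonelli's theorem lets me pull $\mathbb{E}$ inside the $t$-integral and expand
$$\left(\int_0^t \frac{|f(s)|}{|\beta(t)-\beta(s)|^{\alpha}}\,\mathrm ds\right)^2 = \int_0^t\!\!\int_0^t \frac{|f(s_1)||f(s_2)|}{|\beta(t)-\beta(s_1)|^{\alpha}|\beta(t)-\beta(s_2)|^{\alpha}}\,\mathrm ds_1\,\mathrm ds_2,$$
so everything reduces to estimating $\mathbb{E}\big[|\beta(t)-\beta(s_1)|^{-\alpha}|\beta(t)-\beta(s_2)|^{-\alpha}|f(s_1)||f(s_2)|\big]$ and integrating in $s_1,s_2,t$.

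The key probabilistic input is the conditional Gaussian moment: for $s<t$ the increment $\beta(t)-\beta(s)\sim\mathcal N(0,t-s)$ is independent of $\mathcal F_s$, whence $\mathbb{E}\big[|\beta(t)-\beta(s)|^{-\alpha}\mid\mathcal F_s\big]=c_\alpha\,(t-s)^{-\alpha/2}$, which is finite \emph{exactly} because $\alpha<1$. This is the heart of the matter: the random singularity of order $\alpha$ behaves in expectation like a deterministic singularity of the halved order $\alpha/2$. Ordering the two times as $s_1<s_2<t$ and conditioning on $\mathcal F_{s_1}$ (the earlier time), the two increments become $\beta(t)-\beta(s_1)=U+V$ and $\beta(t)-\beta(s_2)=V$ with $U\sim\mathcal N(0,s_2-s_1)$, $V\sim\mathcal N(0,t-s_2)$ independent and independent of $\mathcal F_{s_1}$; integrating out $U$ first, and using that the Gaussian average $\mathbb{E}_U|U+v|^{-\alpha}$ is largest at $v=0$, yields the deterministic effective kernel bound $\lesssim (s_2-s_1)^{-\alpha/2}(t-s_2)^{-\alpha/2}$. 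Feeding this into the triple integral and evaluating the $s_2$- and $t$-integrals as Beta integrals (convergent since $\alpha<2$) produces exactly $c_\alpha' T^{2-\alpha}\,\mathbb{E}\!\int_0^T|f|^2$.

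The main obstacle is the adaptedness of $f$: the decoupling above is clean only when the factor $|f|^2$ I extract sits at the \emph{earlier} of the two times, so that the relevant increment lies in its future and is independent of it. Bounding $|f(s_1)||f(s_2)|\le\frac12(|f(s_1)|^2+|f(s_2)|^2)$ inevitably generates a term in which $|f|^2$ sits at the \emph{later} time while the Brownian weight reaches back into its own past; there $f$ and the weight are both measurable at that time and cannot be separated by forward conditioning, and a concentrated predictable $f$ shows that the naive pointwise bound $\mathbb{E}[|f(s_2)|^2|\beta(s_2)-\beta(s_1)|^{-\alpha}]\lesssim (s_2-s_1)^{-\alpha/2}\mathbb{E}|f(s_2)|^2$ actually fails. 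This term must therefore be handled by \emph{retaining the genuine two-point Gaussian average} rather than collapsing one weight to power $2\alpha$; the latter converges only for $\alpha<1/2$ and is precisely the barrier one hits with Minkowski's inequality or a single Cauchy--Schwarz. Conditioning on the later time $s_2$, the coupled weight is replaced by $(t-s_2)^{-\alpha}\Phi\big((\beta(s_2)-\beta(s_1))/\sqrt{t-s_2}\big)$ with $\Phi(b)=\mathbb{E}\big[|Z+b|^{-\alpha}|Z|^{-\alpha}\big]$, $Z\sim\mathcal N(0,1)$; the extra decay of $\Phi$ at small argument (bounded for $\alpha<1/2$, of order $|b|^{1-2\alpha}$ for $\alpha\in[1/2,1)$) is exactly what tames the region where the past increment is small, and averaging $\Phi$ in $s_1$ against the Gaussian density recovers the same $(s_2-s_1)^{-\alpha/2}(t-s_2)^{-\alpha/2}$ effective kernel. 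I expect this later-time, backward-coupled term --- and in particular making the $\Phi$-decay argument uniform over all predictable $f$ --- to be the technically delicate step, while the forward term and the final Beta-integral bookkeeping are routine.
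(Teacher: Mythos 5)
The paper never proves this lemma: it is imported from \cite{DeBouard} explicitly ``without proof,'' so there is no in-paper argument to compare against, and your attempt has to stand on its own. It does not, and the gap sits exactly where you hedge. Your earlier-time term is correct: when $|f(s_1)|^2$ sits at $s_1<s_2<t$, both weights are functions of increments of $\beta$ after $s_1$, hence independent of $\mathcal{F}_{s_1}$, and the Beta-integral bookkeeping gives $T^{2-\alpha}$. But the later-time term created by $|f(s_1)||f(s_2)|\le\tfrac{1}{2}(|f(s_1)|^2+|f(s_2)|^2)$ is not ``technically delicate'': for $\alpha\in(\tfrac12,1)$ it admits \emph{no} bound of the form $c_\alpha T^{2-\alpha}\,\mathbb{E}\int_0^T|f|^2$ at all, so no refinement of the $\Phi$-asymptotics can rescue it. Indeed, conditioning on $\mathcal{F}_{s_2}$ and integrating out $t$ and $s_1$, that term becomes $\mathbb{E}\int_0^T|f(s_2)|^2\,\Xi(s_2)\,\mathrm{d}s_2$, where $\Xi(s_2)$ is $\mathcal{F}_{s_2}$-measurable with $\Xi(s_2)\gtrsim\sqrt{T-s_2}\int_0^{s_2}|\beta(s_2)-\beta(u)|^{1-2\alpha}\mathbf{1}_{\{|\beta(s_2)-\beta(u)|\le\sqrt{T-s_2}\}}\,\mathrm{d}u$. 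Since $1-2\alpha<0$, the essential supremum of $\Xi(s_2)$ is infinite (confine the path to an $\epsilon$-tube around its running value on $[0,s_2]$: this has positive probability and forces $\Xi(s_2)\gtrsim s_2\sqrt{T-s_2}\,\epsilon^{1-2\alpha}$), and a predictable $f$ may concentrate on $\{\Xi(s)\ge M\}$, driving the ratio of this term to $\mathbb{E}\int_0^T|f|^2$ above $M$ for every $M$. Your proposed repair --- ``averaging $\Phi$ in $s_1$ against the Gaussian density'' --- is precisely the decoupling you yourself showed is forbidden: once $|f(s_2)|^2$ multiplies $\Phi(\beta(s_2)-\beta(s_1))$, the Gaussian law of that increment is no longer usable. (Your argument \emph{is} a complete proof for $\alpha<\tfrac12$, where $\Phi$ is bounded; that range happens to cover the paper's application in Theorem 2, where $\alpha=\tfrac12-\tfrac1p$, but not the lemma as stated, nor the interpolation lemma following it, which allows $\alpha$ arbitrarily close to $1$.)

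The fix is to never split the product. Order $s_1<s_2<t$, condition on $\mathcal{F}_{s_2}$, and use the elementary two-point Gaussian bound: for $X\sim\mathcal{N}(0,\sigma^2)$ and any real $a$, splitting the expectation over $\{|X|\ge|a|/2\}$ and $\{|X|<|a|/2\}\subset\{|X+a|\ge|a|/2\}$, and using that $\mathbb{E}|X+a|^{-\alpha}\le\mathbb{E}|X|^{-\alpha}=c_\alpha\sigma^{-\alpha}$ (a convolution of symmetric decreasing functions peaks at $0$), gives
$$\mathbb{E}\big[\,|X+a|^{-\alpha}|X|^{-\alpha}\,\big]\le 2^{1+\alpha}c_\alpha\,\sigma^{-\alpha}|a|^{-\alpha}.$$
Applied with $X=\beta(t)-\beta(s_2)$, $a=\beta(s_2)-\beta(s_1)$, this replaces the two forward weights by the \emph{deterministic} factor $(t-s_2)^{-\alpha/2}$ times the single backward weight $|\beta(s_2)-\beta(s_1)|^{-\alpha}$, while keeping $|f(s_1)||f(s_2)|$ intact. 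Integrating in $t$ produces $T^{1-\alpha/2}$, and what remains is $\mathbb{E}\int_0^T|f(s_2)|\,g(s_2)\,\mathrm{d}s_2$ with $g(\tau)=\int_0^\tau|\beta(\tau)-\beta(u)|^{-\alpha}|f(u)|\,\mathrm{d}u$ the original inner integral; by Cauchy--Schwarz this is at most $\big(\mathbb{E}\int_0^T|f|^2\big)^{1/2}I^{1/2}$, where $I$ denotes the left-hand side of the lemma. Hence $I\le c_\alpha T^{1-\alpha/2}\big(\mathbb{E}\int_0^T|f|^2\big)^{1/2}I^{1/2}$, and absorbing $I^{1/2}$ gives the claim with exponent $2-\alpha$. (To justify the absorption, run the argument with the truncated kernel $\min(|\beta(t)-\beta(s)|^{-\alpha},N)$, for which $I_N\le N^2T^2\,\mathbb{E}\int_0^T|f|^2<\infty$ and the same conditional estimate holds with $\min(|a|^{-\alpha},N)$ on the right; the resulting bound is uniform in $N$, and monotone convergence finishes.) The idea your proposal is missing is this self-referential absorption: the backward-coupled weight is never estimated pointwise --- it is recognized as a copy of the very quantity being bounded.
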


\begin{lemma}
Let $\alpha = 0$, $r \leq \infty$ or $\alpha \in (0,1)$, $2 \leq r < \frac{2}{\alpha}$ and let $\rho$ be such that $r' \leq \rho \leq r$. Then there exists $C$ such that for any $T \geq 0$ and $f \in L^p_{\rho}(\Omega; L^{r'}(0,T))$,

$$\big| \int_0^t |\beta(t) - \beta(s)| ^{-\alpha}|f(s)| \, \text{d}s \big|_{L^p_{\omega}(L^r(0,T))} \leq CT^{\frac{2}{r}-\frac{\alpha}{2}}|f|_{L^\rho(\Omega;L^{r'}(0,T))}$$
\end{lemma}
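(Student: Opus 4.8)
The plan is to read the inequality as a mapping property of the positive kernel operator
\[
(Tf)(\omega,t) \coloneqq \int_0^t |\beta(t)-\beta(s)|^{-\alpha}\,|f(\omega,s)|\,\text{d}s ,
\]
namely $T\colon L^\rho(\Omega;L^{r'}(0,T)) \to L^\rho(\Omega;L^r(0,T))$ with norm $\lesssim T^{\frac2r-\frac\alpha2}$. Lemma $4.3$, after taking square roots, is exactly the assertion that $T$ is bounded on $L^2(\Omega\times(0,T))$ with norm $\lesssim T^{1-\frac\alpha2}$; since $r'=r=\rho=2$ and $1-\frac\alpha2=\frac2r-\frac\alpha2$ at $r=2$, this is the base case $\rho=r=2$. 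What remains to be produced are two separate features: the time \emph{smoothing} from $L^{r'}_t$ up to $L^r_t$ (a genuine gain, since here $r'\le 2\le r$, which the $L^2\to L^2$ bound does not by itself give), and the freedom in the probabilistic exponent $\rho\in[r',r]$.

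The source of the smoothing is probabilistic. For $t>s$ the increment $\beta(t)-\beta(s)$ is $N(0,t-s)$ and is independent of the past up to time $s$; since $f$ is predictable (the subscript $\mathcal{P}$), $f(s)$ is measurable with respect to that past and is therefore independent of $|\beta(t)-\beta(s)|^{-\alpha}$. Consequently the random kernel may be replaced by its moments, and $\mathbb{E}\,|\beta(t)-\beta(s)|^{-\gamma}=c_\gamma|t-s|^{-\gamma/2}$ whenever $\gamma<1$, so the \emph{effective} deterministic kernel is $|t-s|^{-\alpha/2}$. The finite-interval fractional-integration (Young / Hardy--Littlewood--Sobolev) bound for this kernel maps $L^{r'}(0,T)\to L^r(0,T)$ with precisely the power $T^{\frac2r-\frac\alpha2}$, which is how the target exponent arises. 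I would first establish the diagonal case $\rho=r$ in this way, then obtain the endpoint $\rho=r'$ by duality — the adjoint $T^{*}$ carries the time-reflected kernel $|\beta(t)-\beta(s)|^{-\alpha}\mathbf{1}_{s>t}$, with the same singularity and hence the same bound — and finally interpolate in the Bochner exponent between $\rho=r'$ and $\rho=r$ to recover every $\rho\in[r',r]$ with the same, $\rho$-independent, power of $T$. The trivial configuration $\alpha=0,\ r=\infty$ (where $Tf=\int_0^t|f|\,\text{d}s$ maps $L^1_t\to L^\infty_t$ with constant $1=T^0$) must be carried along separately, since the effective-kernel reasoning is vacuous there.

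The main obstacle is obtaining the \emph{sharp} range $r<\frac2\alpha$ together with the sharp power simultaneously. The naive route — applying Minkowski's inequality in $\omega$ and only then using independence — forces the negative moment $\mathbb{E}\,|\beta(t)-\beta(s)|^{-\alpha r}$ to be finite, i.e. $\alpha r<1$, and so only reaches $r<\frac1\alpha$, which is strictly weaker than the claimed $r<\frac2\alpha$. To close the gap one must arrange the $L^r$ computation so that only a Gaussian negative moment of order $\tfrac{\alpha r}{2}$ is ever invoked, whose finiteness is exactly $r<\frac2\alpha$; concretely I expect this to require splitting the kernel $|\beta(t)-\beta(s)|^{-\alpha}$ into two factors and distributing them by Hölder in $s$, so that an $L^2$-in-$s$ piece is handed to Lemma $4.3$ while the remaining deterministic factor supplies the $T$-power, rather than estimating the kernel in one stroke. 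Calibrating this split so that the threshold comes out as $r<\frac2\alpha$ and the power stays $T^{\frac2r-\frac\alpha2}$, uniformly over predictable $f$, is the delicate step. Once Lemma $4.4$ is established, specializing $\alpha=\tfrac12-\tfrac1p$ turns $\frac2r-\frac\alpha2$ into the $\mu$ of Theorem $2$ and turns $r<\frac2\alpha$ into the hypothesis $\frac2r>\frac12-\frac1p$, so that combining it with the spatial dispersive bound of Lemma $4.2$ produces the inhomogeneous Strichartz estimate.
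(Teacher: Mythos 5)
You have assembled exactly the two base cases the paper uses --- the trivial pathwise bound at $\alpha=0$, $r=\infty$, and the $L^2(\Omega\times(0,T))$ bound with constant $cT^{1-\alpha/2}$ quoted from de Bouard --- but the paper's entire proof consists of interpolating \emph{between these two cases}, and that is precisely the step your proposal replaces with machinery that does not close. The interpolation is of Stein type, on the analytic family of operators with kernel $|\beta(t)-\beta(s)|^{-z}\mathbf{1}_{s<t}$: on the line $\operatorname{Re}z=0$ the kernel has modulus one, so the operator maps $L^{\rho_0}(\Omega;L^1(0,T))\to L^{\rho_0}(\Omega;L^\infty(0,T))$ with constant $1$ for every $\rho_0\in[1,\infty]$; on the line $\operatorname{Re}z=\alpha_1$ (any $\alpha_1<1$) the kernel is dominated in modulus by $|\beta(t)-\beta(s)|^{-\alpha_1}$, so the $L^2$ lemma gives the bound $cT^{1-\alpha_1/2}$ there. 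Interpolating with weight $\theta=\frac{2}{r}$ sends $L^1\to L^{r'}$ and $L^\infty\to L^r$ in time, forces the $L^2$ endpoint to sit at kernel exponent $\alpha_1=\frac{\alpha r}{2}$ (whose admissibility $\alpha_1<1$ is exactly the sharp condition $r<\frac{2}{\alpha}$), produces the constant $T^{\theta(1-\alpha_1/2)}=T^{\frac{2}{r}-\frac{\alpha}{2}}$, and, as $\rho_0$ runs over $[1,\infty]$, sweeps out exactly the range $\rho\in[r',r]$. Threshold, power of $T$, and $\rho$-range all fall out of this single interpolation; predictability causes no trouble because it is the same class of test functions $f$ at both endpoints.

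Your alternative route has two genuine gaps. First, the "effective kernel" step --- replacing $|\beta(t)-\beta(s)|^{-\alpha}$ by $c_\alpha|t-s|^{-\alpha/2}$ via independence --- is not a legitimate operation inside $\mathbb{E}\,\|\cdot\|_{L^r(0,T)}^{\rho}$: expectation does not pass through the nonlinear norm, and the rigorous substitute (Minkowski in $\omega$, then conditioning) requires $\mathbb{E}|\beta(t)-\beta(s)|^{-\alpha r}<\infty$, i.e.\ only $r<\frac{1}{\alpha}$, exactly as you concede. You then defer the repair ("I expect this to require splitting the kernel\ldots is the delicate step"), but that repair \emph{is} the content of the lemma; as written, the sharp range $r<\frac{2}{\alpha}$ with the power $T^{\frac{2}{r}-\frac{\alpha}{2}}$ is never established. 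Second, the duality step producing $\rho=r'$ fails for a structural reason: predictability is not preserved under taking adjoints. The adjoint pairs the increment $\beta(t)-\beta(s)$, $s<t$, against a test function evaluated at the \emph{later} time $t$, which is $\mathcal{F}_t$-measurable, so the independence driving every moment computation (including the $L^2$ lemma itself) is unavailable for $T^{*}$; "same singularity, hence the same bound" does not follow. Both difficulties disappear if you interpolate directly between the two endpoint bounds you already have, which is what the paper does.
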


\begin{proof}
First, plug in $\alpha = 0$ and see the equation obviously holds when $r = \infty$. Additionally, if $\alpha < 1$ and $r = \rho = 2$, this is Lemma $4.1$. Using these two cases, we can use interpolation to generate the full result.

\end{proof}

\begin{theorem***}
    Let $2 \leq r < \infty$ and $2 \leq p \leq \infty$ such that $\frac{2}{r} > \frac{1}{2} - \frac{1}{p}$ or $r=\infty$ and $p=2$. Additionally, set $\rho$ such that $r' \leq \rho \leq r$, where $r'$ is the Holder conjugate of $r$. Then, there exists a constant $c_{\rho,r,p}$ such that for any $s \in \mathbb{R}$, $T \leq 0$, and $f \in L^{\rho}_{\mathcal{P}}(\Omega; L^{r'}(s,s+T;L_x^{p'}))$

    $$|\int_s^{\cdot} S(\cdot,\sigma)P_{ac}(H)f(\sigma) \, \text{d} \sigma|_{L^{\rho}(\Omega; L^{r}(s,s+T;L_x^{p}))} \geq c_{\rho,r,p}T^{\mu}|f|_{L^{\rho}(\Omega; L^{r'}(s,s+T;L_x^{p'}))} $$
with $\mu = \frac{2}{r} + \frac{1}{2p} - \frac{1}{4}$
\end{theorem***}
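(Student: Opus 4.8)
The statement asserts a \emph{lower} bound. Writing $\Phi f \coloneqq \int_s^{\cdot} S(\cdot,\sigma)P_{ac}(H)f(\sigma)\,\text{d}\sigma$, and abbreviating the two norms by $\|\cdot\|_Y$ on $L^{\rho}(\Omega;L^{r}(s,s+T;L_x^{p}))$ and $\|\cdot\|_X$ on $L^{\rho}(\Omega;L^{r'}(s,s+T;L_x^{p'}))$, what must be shown is that the Duhamel map is \emph{bounded below}, $\|\Phi f\|_Y \geq c_{\rho,r,p}T^{\mu}\|f\|_X$ for every admissible $f$. The plan is to produce a left inverse for $\Phi$ directly from the equation. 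Since $u=\Phi f$ is the mild solution of the inhomogeneous version of \eqref{eq3} with $u(s)=0$, one can recover the forcing by applying the evolution-operator inverse: using the unitarity of $S(t,s)$ on $L_x^2$ together with the cocycle identity $S(s,t)S(t,\sigma)=S(s,\sigma)$, one finds $\partial_t\big[S(t,s)^{-1}u(t)\big]=S(s,t)f(t)$, hence a formal operator $L$ with $L\Phi=\mathrm{Id}$. Then $\|f\|_X=\|L\Phi f\|_X$, and the desired lower bound is \emph{equivalent} to the boundedness of $L$ from $Y$ into $X$. The first step is therefore to make this left inverse precise in the stochastic setting (keeping track of the Itô correction in $\partial_t$).

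The quantitative content — the constant $c_{\rho,r,p}$ and the exponent $\mu=\frac{2}{r}+\frac{1}{2p}-\frac{1}{4}$ — is dictated by the same scaffolding that drives the upper-bound theory, so I would extract it from there. The pointwise dispersive decay $|S(t,\sigma)P_{ac}(H)|_{p'\to p}\lesssim|\beta(t)-\beta(\sigma)|^{-(\frac12-\frac1p)}$ (the first lemma of Section~4) fixes the singularity exponent $\alpha=\frac12-\frac1p$, and the stochastic time-integration estimates (Lemma~4.1 and the interpolated space-time bound producing the factor $T^{\frac{2}{r}-\frac{\alpha}{2}}$) convert the $\sigma$-convolution into a power of $T$; substituting $\alpha=\frac12-\frac1p$ gives exactly $T^{\frac{2}{r}-\frac14+\frac1{2p}}=T^{\mu}$. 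To pin the constant from \emph{below} rather than above, I would test $\Phi$ on a family of tensor inputs $f(\sigma,x)=g(x)\psi(\sigma)$ with $\psi$ spread as evenly as possible across $[s,s+T]$, so that the $\sigma$-integral accumulates constructively, and then read off $c_{\rho,r,p}$ as the infimum of $\|\Phi f\|_Y/\|f\|_X$ over this family.

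The hard part — and I expect it to be the genuine obstacle — is the bounded-below step itself. The Duhamel map is a Volterra-type integration in $\sigma$, hence \emph{smoothing}, and the formal left inverse $L$ involves a $t$-derivative and is therefore unbounded, so there is no a priori reason for $\|L\|_{Y\to X}$ to be finite. Concretely, concentrating $\psi$ near the endpoint $\sigma=s+T$ leaves only a short window on which $\Phi f$ is nonzero, driving $\|\Phi f\|_{L^r_t}$ to zero while $\|f\|_{L^{r'}_t}$ stays fixed; this endpoint loss is exactly the mechanism that threatens any uniform lower bound for finite $r$. I would therefore expect the decisive work to lie either in \emph{restricting the admissible class of $f$} so that such cancellation near the time endpoints cannot occur, or in reducing to the homogeneous situation of the following theorem, where the input is data at the single time $s$ and the $L_x^2$-isometry of $S(\cdot,s)$ does furnish a genuine coercivity estimate. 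In short, the preceding lemmas are engineered to control $\Phi$ from above, and squeezing a matching estimate from below is where essentially all of the difficulty concentrates; that is the step I would attack first.
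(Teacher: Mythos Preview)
You have been misled by a typographical error in the displayed inequality: the paper writes $\geq$ here, but both the statement of the same result in the Introduction (Theorem~2) and, decisively, the paper's own proof immediately following the statement establish the \emph{upper} bound
\[
\Big|\int_s^{\cdot} S(\cdot,\sigma)P_{ac}(H)f(\sigma)\,\text{d}\sigma\Big|_{L^{\rho}(\Omega;L^{r}(s,s+T;L_x^{p}))}
\;\le\; c_{\rho,r,p}\,T^{\mu}\,|f|_{L^{\rho}(\Omega;L^{r'}(s,s+T;L_x^{p'}))}.
\]
The paper's argument is the standard two-line one: bound $|\int_s^t S(t,\sigma)P_{ac}(H)f(\sigma)\,\text{d}\sigma|_{L_x^p}$ pointwise in $t$ using the dispersive estimate of Lemma~4.1, obtaining $\int_s^t |\beta(t)-\beta(\sigma)|^{-(\frac12-\frac1p)}|f(\sigma)|_{L_x^{p'}}\,\text{d}\sigma$, and then apply Lemma~4.3 with $\alpha=\frac12-\frac1p$ to pass to $L^{\rho}_\omega L^r_t$. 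The exponent $\mu=\frac{2}{r}-\frac{\alpha}{2}=\frac{2}{r}+\frac{1}{2p}-\frac14$ falls out exactly as you computed.

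Your instincts about the lower bound were in fact correct, and they should have tipped you off that the sign was wrong. The endpoint-concentration example you describe --- taking $f(\sigma,x)=g(x)\psi(\sigma)$ with $\psi$ supported in $[s+T-\varepsilon,s+T]$ --- really does drive $\|\Phi f\|_{L^r_t}$ to zero while $\|f\|_{L^{r'}_t}$ stays bounded away from zero, so no uniform lower bound of the asserted form can hold. The left-inverse strategy via $\partial_t[S(t,s)^{-1}u(t)]$ fails for exactly the reason you identify: $L$ is a differentiation and is unbounded from $Y$ to $X$. Rather than looking for a restriction on $f$ or a reduction to the homogeneous problem, the right move is simply to read the inequality the other way and supply the short upper-bound proof above.
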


\begin{proof}

    Let $(r,p)$ be an admissible pair if $r = \infty$ and $p = 2$ or if $2 \leq r < \infty, 2 \leq p \leq \infty$, and $\frac{2}{r} > \frac{1}{2} - \frac{1}{p}$

    Choose some admissible pair $(r,p)$. Choose $\rho$ such that $r' \leq \rho \leq r$ and let $f \in L_{\mathcal{P}}^{\rho}(\Omega; L^{r'}(s,s+T;L_x^{p'}))$. Using Lemma $6$, note

    $$\big| \int_s^t S(t,\sigma) P_{ac}(H) f(\sigma) \, \text{d} \sigma \big|_{L_x^{p}}$$

    $$\leq \int_s^t |S(t,\sigma) P_{ac}(H)f(\sigma)|_{L_x^p} \, \text{d} \sigma$$

    $$\leq c \int_s^t \frac{1}{|\beta(t)-\beta(\sigma)|^{\frac{1}{2}-\frac{1}{p}}}|f(\sigma)|_{L_x^{p'}} \, \text{d}\sigma$$

    Using Lemma $4.3$ with $\alpha = \frac{1}{2} - \frac{1}{p} \in [0,1)$, we see

    $$\mathbb{E} \left( \big| \int_s^{\cdot} S(\cdot,\sigma) P_{ac}(H) f(\sigma) \, \text{d}\sigma \big|_{L^r(s,s+T;L_x^p)}^{\rho} \right) \leq cT^{\rho(\frac{2}{r}+\frac{1}{2p}-\frac{1}{4})}|f|^{\rho}_{L^{\rho}(\Omega;L^{r'}(s,s+t;L_x^{p'}))}$$

    leading to the theorem.
\end{proof}
    \begin{theorem***}
    Let $2 \leq r < \infty$ and $2 \leq p \leq \infty$ be such that $\frac{2}{r} > \frac{1}{2} - \frac{1}{p}$ or $r = \infty$ and $p = 2$. Then there exists a constant $c_{r,p} > 0$ such that for $s \in \mathbb{R}, T \geq 0$, $u_s \in L^r(\Omega;L_x^2)$,

    $$S(\cdot,s)P_{ac}(H)u_s \in L^r_{\mathcal{P}}(\Omega;L^r(s,s+T;L_x^p))$$

    and 

    $$|S(\cdot,s)P_{ac}(H)u_s|_{L^r(\Omega;L^r(s,s+T;L^p_x))} \leq cT^{\frac{\mu}{2}}|u_s|_{L^r_{\omega}(L_x^2)}$$

    where $\mu = \frac{2}{r} - \frac{1}{2}(\frac{1}{2} - \frac{1}{p})$

    \text{Specifically},

    $$|e^{-i\beta(t)H}P_{ac}(H)u_0|_{L^r(\Omega;L^r(0,T;L^p_x))} \leq cT^{\frac{\mu}{2}}|u_0|_{L^r_{\omega}(L_x^2)}$$

    \end{theorem***}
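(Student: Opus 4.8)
The plan is to deduce this homogeneous estimate from the dispersive bound (Lemma 6) and the stochastic estimate (Lemma 4.3) by a $TT^*$/duality argument, exploiting the fact that the left-hand norm $L^r(\Omega;L^r(s,s+T;\cdot))$ couples the $\Omega$- and $t$-integrations at the common power $r$. Writing $I=(s,s+T)$, the target norm is then simply $|\,\cdot\,|_{L^r(\Omega\times I;\,L^p_x)}$, whose dual is $L^{r'}(\Omega\times I;L^{p'}_x)$. The case $r=\infty,\,p=2$ is immediate: there $\mu=0$, and since $S(t,s)$ is unitary, $|S(t,s)P_{ac}(H)u_s|_{L^2_x}\le|u_s|_{L^2_x}$ pathwise for every $t$, so the bound holds with constant $1$. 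I therefore assume $2\le r<\infty$.

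First I would reduce to a dual statement. By duality, $|S(\cdot,s)P_{ac}(H)u_s|_{L^r(\Omega\times I;L^p_x)}$ equals the supremum over $g$ with $|g|_{L^{r'}(\Omega\times I;L^{p'}_x)}\le1$ of $\mathbb{E}\int_I\langle S(t,s)P_{ac}(H)u_s,g(t)\rangle\,dt$. Using that $H$ is self-adjoint and $P_{ac}(H)$ commutes with $S$, so that $S(t,s)^*=S(s,t)$, I move the operator onto $g$ and apply Hölder in $x$ and in $\Omega$ (exponents $r,r'$) to obtain $\mathbb{E}\langle u_s,\int_I P_{ac}(H)S(s,t)g(t)\,dt\rangle\le|u_s|_{L^r(\Omega;L^2_x)}\,|T^*g|_{L^{r'}(\Omega;L^2_x)}$, where $T^*g:=\int_I P_{ac}(H)S(s,t)g(t)\,dt$. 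Thus it suffices to prove the dual estimate $|T^*g|_{L^{r'}(\Omega;L^2_x)}\le c\,T^{\mu/2}\,|g|_{L^{r'}(\Omega\times I;L^{p'}_x)}$.

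The core computation is to expand $|T^*g|_{L^2_x}^2$ pathwise. Using the cocycle identity $S(s,t)^*S(s,\tau)=S(t,\tau)$ gives $|T^*g|_{L^2_x}^2=\int_I\int_I\langle g(t),P_{ac}(H)S(t,\tau)g(\tau)\rangle\,dt\,d\tau$, and the dispersive bound of Lemma 6, with $\alpha=\tfrac12-\tfrac1p$, turns this into $\lesssim\int_I\int_I|g(t)|_{L^{p'}_x}\,|\beta(t)-\beta(\tau)|^{-\alpha}\,|g(\tau)|_{L^{p'}_x}\,dt\,d\tau$. Setting $h(\tau)=|g(\tau)|_{L^{p'}_x}$ and applying Hölder in $t$, this is $\lesssim|h|_{L^{r'}(I)}\,\big|\int_I|\beta(\cdot)-\beta(\tau)|^{-\alpha}h(\tau)\,d\tau\big|_{L^r(I)}$. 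Taking the $L^{r'}(\Omega)$-norm of $|T^*g|_{L^2_x}^2$ and applying Cauchy--Schwarz in $\Omega$ — legitimate even though $r'\le2$, since one bounds $\mathbb{E}[X^{r'/2}Y^{r'/2}]\le(\mathbb{E}X^{r'})^{1/2}(\mathbb{E}Y^{r'})^{1/2}$ — I reduce to the stochastic estimate $\big|\int_I|\beta(\cdot)-\beta(\tau)|^{-\alpha}h(\tau)\,d\tau\big|_{L^{r'}(\Omega;L^r(I))}\lesssim T^{2/r-\alpha/2}|h|_{L^{r'}(\Omega\times I)}$, which is Lemma 4.3 with $\rho=r'$ (and the full integral in place of the retarded one). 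Combining these bounds yields $|T^*g|_{L^{r'}(\Omega;L^2_x)}^2\lesssim T^{2/r-\alpha/2}|g|_{L^{r'}(\Omega\times I;L^{p'}_x)}^2$, and since $2/r-\alpha/2=\mu$ this is exactly the factor $T^{\mu/2}$, proving the dual estimate and hence the theorem; membership in $L^r_{\mathcal{P}}$ follows since the bound gives finiteness and $S(t,s)P_{ac}(H)u_s$ is adapted.

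The step I expect to be the main obstacle is the bookkeeping of the $\Omega$-integration inside the $TT^*$ scheme: because $r\ge2$ forces $r'\le2$, the quantity $|T^*g|_{L^2_x}^2$ cannot be treated by a Minkowski/triangle inequality in $L^{r'/2}(\Omega)$, so one must keep it as a product and use pathwise Hölder in $t$ followed by Cauchy--Schwarz in $\Omega$, as above. A secondary point is that Lemma 4.3 is stated for the retarded integral $\int_s^t$ whereas the duality produces the full integral $\int_I$; I would justify the full-integral version by splitting $\int_I=\int_s^t+\int_t^{s+T}$ and noting that the advanced piece obeys the same bound by time reversal, since the law of the increments $\beta(t)-\beta(\tau)$ depends only on $|t-\tau|$. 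If Lemma 4.3 nominally requires predictability of its integrand, one checks that the inequality in fact uses only the Gaussian kernel bounds for $\beta(t)-\beta(\tau)$ and so applies to the general $h$ arising here.
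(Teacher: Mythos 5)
Your overall scheme---duality against test functions $g$, expansion of $|T^*g|_{L^2_x}^2$ as a double time integral, the dispersive bound (your Lemma 6, the paper's Lemma 4.1), H\"older in $t$, Cauchy--Schwarz in $\Omega$, and the stochastic estimate of Lemma 4.3 with $\rho=r'$---is the same scheme the paper uses, and your exponent bookkeeping ($2/r-\alpha/2=\mu$, hence $T^{\mu/2}$ after the square root) is correct; the paper merely packages the last three steps by citing Theorem 2 (the retarded inhomogeneous estimate) instead of re-deriving them. However, there is a genuine gap at precisely the step you label as secondary: the passage from the retarded integral $\int_s^t$ of Lemma 4.3 to the full integral $\int_I$. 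Your time-reversal argument does not work as stated: reversing time does turn $\beta$ into another Brownian motion, but it destroys predictability of the integrand---if $h(\tau)=|g(\tau)|_{L^{p'}_x}$ is $\mathcal{F}_\tau$-measurable, the reversed integrand is measurable with respect to the \emph{future} increments of the reversed Brownian motion, i.e.\ it is anticipating. Your fallback claim, that Lemma 4.3 ``uses only the Gaussian kernel bounds'' and hence applies to general $h$, is unjustified: Lemma 4.3 rests on Lemma 4.2 (quoted from de Bouard--Debussche), whose proof conditions on $\mathcal{F}_\sigma$ and uses exactly that $f(\sigma)$ is $\mathcal{F}_\sigma$-measurable while the increment $\beta(t)-\beta(\sigma)$, $t>\sigma$, is independent of $\mathcal{F}_\sigma$; for an anticipating integrand, which can correlate with the small values of $|\beta(t)-\beta(\sigma)|$, that argument collapses, and no lemma in the paper covers this case.

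Fortunately the gap closes with the paper's own trick: you never need the advanced piece at all. After applying the dispersive bound, your kernel is symmetric in $(t,\tau)$, so
\[
\int_I\int_I h(t)\,|\beta(t)-\beta(\tau)|^{-\alpha}\,h(\tau)\,dt\,d\tau
= 2\int_I h(t)\left(\int_s^t |\beta(t)-\beta(\tau)|^{-\alpha}\,h(\tau)\,d\tau\right)dt ,
\]
and then H\"older in $t$ plus the \emph{retarded} Lemma 4.3 finish the estimate. This is exactly the paper's symmetrization step: it writes the double integral $\iint_{I\times I}(f(t),S(t,\sigma)P_{ac}(H)f(\sigma))\,dt\,d\sigma$ as twice the integral over $\{s\le\sigma\le t\le s+T\}$ (using that $S(\sigma,t)$ is the adjoint of $S(t,\sigma)$, up to taking a real part, which is harmless since the left side is real), and then invokes Theorem 2, which only ever requires the retarded stochastic estimate. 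One further point you should make explicit: for Lemma 4.3 to apply, $h$ must be predictable, so the duality should be run over predictable test functions $g$ only; this loses nothing, because $S(\cdot,s)P_{ac}(H)u_s$ is adapted, so its $L^r(\Omega\times I;L^p_x)$ norm is already attained by pairing against predictable $g$.
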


\begin{proof}
Let the adjoint of $S(t,s)$ be $S(s,t)$ with the adjoint taken with respect to the $L_x^2$ inner product. Thus, when $f \in L_{\mathcal{P}}^2(\Omega \times [s,s+T] \times \mathbb{R})$ and $u_s \in L^r(\Omega; L_x^2)$, we can use the adjoint to see

$$\int_s^{s+T} (S(t,s)P_{ac}(H)u_s,f(t)) \, \text{d}t$$

$$ = \int_s^{s+T} (u_s,S(s,t)P_{ac}(H)f(t)) \, \text{d}t$$

$$ \leq |u_s|_{L_x^2} \big| \int_s^{s+T} S(s,t)P_{ac}(H)f(t) \, \text{d}t \big|_{L_x^2}$$

We also see

$$\big| \int_s^{s+T} S(s,t)P_{ac}(H)f(t) \, \text{d}t \big|^2_{L_x^2}$$

$$ = \int_s^{s+T} \int_s^{s+T} (f(t),S(t,\sigma)P_{ac}(H)f(\sigma)) \, \text{d}t \, \text{d} \sigma$$

$$ = \int \int_{s \leq \sigma \leq t \leq s+T} (f(t),S(t,\sigma)P_{ac}(H)f(\sigma)) \, \text{d}t \, \text{d} \sigma + \int \int_{s \leq t \leq \sigma \leq s+T} (f(t),S(t,\sigma)P_{ac}(H)f(\sigma)) \, \text{d}t \, \text{d} \sigma$$

$$ = 2\int \int_{s \leq \sigma \leq t \leq s+T} (f(t),S(t,\sigma)P_{ac}(H)f(\sigma)) \, \text{d}t \, \text{d} \sigma $$

$$ = 2 \int_s^{s+T} \left( f(t), \int_s^t S(t,\sigma)P_{ac}(H)f(\sigma) \, \text{d} \sigma \right) \, \text{d}t $$

$$\leq 2|f|_{L^{r'}(s,s+T;L_x^{p'})} \big| \int_s^{\cdot} S(\cdot,\sigma)P_{ac}(H)f(\sigma) \, \text{d} \sigma \big|_{L^r(s,s+T;L_x^p)}$$

We can then use theorem $2$ to see

$$\mathbb{E} \int_s^{s+T} (S(t,s)u_s,f(t)) \, \text{d}t$$

$$\leq \sqrt{2} \mathbb{E} \left( |u_s|_{L^2_x}|f|^{\frac{1}{2}}_{L^{r'}(s,s+T;L_x^{p'})} \big| \int_s^{\cdot} S(\cdot,\sigma)P_{ac}(H)f(\sigma)\, \text{d} \sigma\big|^{\frac{1}{2}}_{L^r(s,s+T;L_x^p)} \right)$$

$$\leq \sqrt{2}  |u_s|_{L^2_x}|f|^{\frac{1}{2}}_{L_{\omega}^{r'}(L^{r'}(s,s+T;L_x^{p'}))} \big| \int_s^{\cdot} S(\cdot,\sigma)P_{ac}(H)f(\sigma)\, \text{d} \sigma\big|^{\frac{1}{2}}_{L_{\omega}^{r'}(L^r(s,s+T;L_x^p))} $$

$$\leq cT^{\frac{\mu}{2}}|u_s|_{L_{\omega}^r(|_x^2)}|f|_{L_{\omega}^{r'}(L^{r'}(s,s+T;L_x^{p'}))}$$

with $\mu = \frac{2}{r} - \frac{1}{2}(\frac{1}{2} - \frac{1}{p})$
\end{proof}

\vspace{3cm}

\printbibliography

\end{document}